\documentclass[12pt]{amsart}

\usepackage{amssymb}
\usepackage{hyperref} 
\usepackage{enumitem}

\usepackage{amsmath}
\usepackage{latexsym}
\usepackage{extarrows}
\usepackage{enumerate}
\usepackage{txfonts}
\usepackage{mathtools}
\usepackage{bm}
\usepackage{tikz-cd}
\usepackage{xcolor}

\makeatletter
\@namedef{subjclassname@2020}{%
  \textup{2020} Mathematics Subject Classification}
\makeatother

\usepackage[T1]{fontenc}

\theoremstyle{plain}

\newtheorem{theorem}{Theorem}[section]
\newtheorem{proposition}[theorem]{Proposition}

\newtheorem{lemma}[theorem]{Lemma}
\newtheorem{corollary}[theorem]{Corollary}

\theoremstyle{definition}

\newtheorem{definition}[theorem]{Definition}

\newtheorem{example}[theorem]{Example}

\newcommand\E{\mathbb{E}}
\newcommand\Z{\mathbb{Z}}
\newcommand\R{\mathbb{R}}
\newcommand\Q{\mathbb{Q}}
\newcommand\T{\mathbb{T}}

\newcommand\eps{\varepsilon}

\DeclareMathOperator{\Id}{Id}
\DeclareMathOperator{\Stab}{Stab}
\DeclareMathOperator{\diam}{diam}

\begin{document}


\baselineskip=17pt


\title[]{Local rigidity of self-joinings and factors of pro-nilsystems}

\author[P. Eeckhaut]{Pauwel Van Den Eeckhaut}
\address{Institute of Mathematics \\ University of Bonn \\ 53113 Bonn, Germany}
\email{s97pvand@uni-bonn.de}

\author[A. Jamneshan]{Asgar Jamneshan}
\address{Institute of Mathematics \\ University of Bonn \\ 53113 Bonn, Germany}
\email{ajamnesh@math.uni-bonn.de}

\date{\today}

\begin{abstract}  
It is an immediate consequence of the ergodic structure theorem of Host and Kra that every factor of an ergodic $k$-step pro-nilsystem is again an ergodic $k$-step pro-nilsystem. It has remained open whether this fact can be proved independently of the structure theorem itself. In this note, we give such a proof. The key new ingredient is a local rigidity theorem for nilsystems: any ergodic self-joining sufficiently close to the diagonal joining is necessarily the graph joining of an automorphism. This rigidity result may be of independent interest. As an application, our proof of the factor-closure of pro-nilsystems combined with a result of Tao yields a new proof of the ergodic structure theorem of Host and Kra from the combinatorial inverse theorem of Green, Tao, and Ziegler for the Gowers norms on cyclic groups. We can also use our methods to establish an independent proof of the factor-closure property of topological pro-nilsystems, a fact that can also be derived from the topological structure theorem of Host, Kra, and Maass.
\end{abstract}

\subjclass[2020]{Primary 37A35; Secondary 22E25, 22F30.} 

\keywords{}

\maketitle

\section{Introduction}

A $k$-step nilsystem is a measure-preserving dynamical system $(X,\mu,T)$ such that
$X = G/\Gamma$ is a $k$-step nilmanifold, $\mu$ is its Haar probability measure, and
$T \colon X \to X$ is the nilrotation $T(x) = \tau \cdot x$ for some fixed element $\tau \in G$.
A $k$-step pro-nilsystem is a measure-preserving dynamical system that is the inverse
limit, in the measure-theoretic sense, of a countable inverse system of $k$-step
nilsystems. All background and facts on nilsystems used in
this paper can be found in \cite[Part~3]{hk-book}, whose notation we adopt throughout.

Pro-nilsystems occupy a central place in the study of multiple ergodic
averages introduced by
Furstenberg \cite{furstenberg1977ergodic} in his ergodic-theoretic proof of
Szemer\'edi's theorem. 
Host and Kra \cite{host2005nonconventional} introduced a theory of cubical
structures and associated seminorms, used these to define systems of order $k$,
and established a structure theorem identifying ergodic systems of order $k$
with $k$-step pro-nilsystems (see Theorem \ref{hkz-thm} below). As a consequence, they identified
characteristic factors for multiple ergodic averages and proved their $L^2$-convergence. 
Independently, Ziegler \cite{ziegler2007universal} also proved the
$L^2$-convergence of these averages by showing that the universal
characteristic factors she introduced are pro-nilsystems, using a
different construction of these factors. The relation
between the Host--Kra factors of order $k$ and Ziegler's $k$th universal characteristic factors was clarified by Leibman
\cite{leibman-factor}, who showed that the two descriptions agree.

We briefly recall the seminorm formulation of the Host--Kra factors of order $k$. Let
$(X,\mu,T)$ be a measure-preserving dynamical system and let $f \in L^\infty(\mu)$. The
Gowers--Host--Kra seminorms are defined recursively by
\[
\|f\|_{U^1(X)} := \left| \int_X f\,d\mu \right|
\]
and, for $k \geq 1$,
\[
\|f\|_{U^{k+1}(X)}^{2^{k+1}}
:=
\lim_{N\to\infty} \frac{1}{N}\sum_{n=1}^N
\bigl\| \overline{f}\, T^n f \bigr\|_{U^k(X)}^{2^k}.
\]
The system $(X,\mu,T)$ is said to be of order $k$ if
\[
\|f\|_{U^{k+1}(X)} = 0
\quad\Longrightarrow\quad
f=0 \ \ \mu\text{-almost everywhere}
\]
for every $f \in L^\infty(\mu)$.

\begin{theorem}[Host--Kra structure theorem]\label{hkz-thm}
Let $k \geq 1$. An ergodic measure-preserving dynamical system is of order $k$
if and only if it is measure-theoretically isomorphic to a $k$-step
pro-nilsystem.
\end{theorem}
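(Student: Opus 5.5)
The statement has two directions of very different difficulty, and I will treat them separately.

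\emph{Pro-nilsystems are of order $k$.} I would first show that every ergodic $k$-step nilsystem $(G/\Gamma,\mu,T)$ is of order $k$. The Host--Kra cube measure $\mu^{[k+1]}$ of a nilsystem is the Haar measure on the nilmanifold $G^{[k+1]}/\Gamma^{[k+1]}$ of the Host--Kra cube group. Since $G$ is $k$-step nilpotent, the last coordinate of a cube in $G^{[k+1]}$ is determined by the other $2^{k+1}-1$ coordinates. The seminorm satisfies
\[
\|f\|_{U^{k+1}}^{2^{k+1}}=\int \prod_{\omega\in\{0,1\}^{k+1}} \mathcal{C}^{|\omega|} f(x_\omega)\,d\mu^{[k+1]}(x).
\]
By the Cauchy--Schnorr--Gowers inequality, $\|f\|_{U^{k+1}}=0$ then gives $\int f\otimes g\,d\mu^{[k+1]}=0$ for all bounded functions $g$ of the remaining coordinates. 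Hence $f\circ\pi_{\bm 1}$ is orthogonal to an algebra of functions that contains $f\circ\pi_{\bm 1}$ itself, because $x_{\bm 1}$ is measurable with respect to the other coordinates. This forces $f=0$.

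Passing to inverse limits is then straightforward. If $X=\varprojlim X_i$ and $\|f\|_{U^{k+1}(X)}=0$, the conditional expectations satisfy $\|E(f\mid X_i)\|_{U^{k+1}(X_i)}\le\|f\|_{U^{k+1}(X)}=0$, since conditional expectation onto a factor does not increase the seminorm. So each $E(f\mid X_i)$ vanishes, and by martingale convergence $f=0$.

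\emph{Order $k$ implies pro-nilsystem.} This is the deep direction, and I would follow Host--Kra by induction on $k$. The case $k=1$ is the Kronecker (Halmos--von Neumann) theorem. For $k\ge 2$, the order-$k$ system $X$ is an extension of its order-$(k-1)$ factor $Z_{k-1}$. By induction, $Z_{k-1}$ is a $(k-1)$-step pro-nilsystem. The extension is a compact abelian group extension by a cocycle $\rho\colon Z_{k-1}\to U$ of ``type $k$'', meaning that $\Delta^{[k]}\rho$ is a coboundary on the cube system $Z_{k-1}^{[k]}$.

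The key step is to show that $\rho$ is cohomologous, after passing to a sufficiently high nilsystem level $Z_{k-1,i}$, to a cocycle coming from a nilsystem. For this I would build the group $\mathcal{G}$ of transformations of $X$ that lift the translations of $Z_{k-1,i}$ and preserve the cocycle up to coboundaries. Using the type condition, I would show that $\mathcal{G}$ is a locally compact nilpotent group of step $k$. For finite-dimensional quotients of $U$ and large $i$, $\mathcal{G}$ is a Lie group acting transitively, which realizes $X$ as an inverse limit of $G/\Gamma$.

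The main obstacle is exactly this step: proving that the type-$k$ cocycle descends to finite levels and that the resulting group is a Lie group. I would attack it by an approximation argument for cocycles with values in a torus: functional equations of type $k$ on nilsystems are rigid, so cocycles close to measurable solutions are cohomologous to polynomial ones. The Lie property would then follow from a no-small-subgroups argument based on this rigidity.
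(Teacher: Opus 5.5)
Your first direction (pro-nilsystems are of order $k$) is the standard argument and is fine. It relies on two facts you import from Host--Kra: that $\mu^{[k+1]}$ is Haar measure on the cube nilmanifold, and that unique closing of cubes passes from $G^{[k+1]}$ to the quotient by $\Gamma^{[k+1]}$, which needs a little more than $k$-step nilpotency of $G$. The paper likewise takes this direction as known.

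The hard direction, however, has a genuine gap exactly at the step you call the main obstacle. What you write there describes Host and Kra's strategy rather than carrying it out. You assert without argument two things. First, that a type-$k$ cocycle $\rho\colon Z_{k-1}\to U$ over $Z_{k-1}=\varprojlim Z_{k-1,i}$ (with $U$ reduced to a Lie group) is cohomologous to a cocycle measurable with respect to some finite level $Z_{k-1,i}$. Second, that the group $\mathcal G$ of lifts is a $k$-step nilpotent Lie group acting transitively with discrete cocompact stabilizer. The sentence ``functional equations of type $k$ on nilsystems are rigid, so cocycles close to measurable solutions are cohomologous to polynomial ones'' does not say what is close to what, in which topology, or why closeness forces cohomology. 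The no-small-subgroups property of $\mathcal G$ is likewise not derived from anything. These two steps are essentially the entire content of the Host--Kra theorem, so as written this half is a citation of it, not a proof.

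The paper does not carry out any such cocycle analysis. It attributes Theorem~\ref{hkz-thm} to Host--Kra and observes that a new proof of the hard direction follows from combining two results. One is Tao's theorem, deduced from the Green--Tao--Ziegler inverse theorem, that every system of order $k$ is a factor of a $k$-step pro-nilsystem. The other is Theorem~\ref{thm-main}, which the paper proves intrinsically. The rigidity and no-small-subgroups mechanism you envisage does appear there, but for self-joinings instead of cocycles. Proposition~\ref{prop:neighbourhood_diag_joining_graph}, via Lemma~\ref{lem:no_small_subnilmanifolds_averaged}, shows that ergodic self-joinings close to the diagonal are graph joinings of automorphisms. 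This forces the vertical rotations in an open subgroup $H_i\le K$ to descend to each finite level $X_i$ in Theorem~\ref{thrm:special_case}. Your route stays inside ergodic theory but requires the full cube-and-cocycle machinery. The paper's route outsources the existence of a pro-nilsystem extension to the combinatorial inverse theorem, and then needs only this comparatively short descent argument for factors.
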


A natural question is whether the class of $k$-step pro-nilsystems is closed
under taking factors\footnote{Recall that a factor of a measure-preserving system $(X,\mu,T)$ is a
measure-preserving system $(Y,\nu,S)$ for which there exists a
measure-preserving map $\pi\colon X \to Y$ such that $\pi \circ T = S \circ \pi$ $\mu$-almost surely.}. Since any factor of a measure-preserving dynamical system of order $k$ is again a
measure-preserving dynamical system of order $k$ (see
\cite[Proposition~17, Chapter~9]{hk-book}), it follows immediately from
Theorem~\ref{hkz-thm} that:

\begin{theorem}\label{thm-main}
Any factor of an ergodic $k$-step pro-nilsystem is itself a $k$-step
pro-nilsystem.
\end{theorem}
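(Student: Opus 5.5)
At this point in the paper Theorem~\ref{thm-main} is derived in one line: combine Theorem~\ref{hkz-thm} with the inheritance of order $k$ by factors. Let $(X,\mu,T)$ be an ergodic $k$-step pro-nilsystem and $\pi\colon X\to Y$ a factor map onto $(Y,\nu,S)$.

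\textbf{Step 1 ($X$ has order $k$).} By the easy direction of Theorem~\ref{hkz-thm}, $X$ is of order $k$. This direction can be checked directly. For a $k$-step nilsystem, the Host--Kra cube measures are the Haar measures on the cube nilmanifolds. Hence the $U^{k+1}$ seminorm controls $f$ through the $(k+1)$-dimensional dual functions, and it defines a norm on $L^\infty$. The property then passes to inverse limits, since every $f$ is an $L^2$-limit of functions measurable with respect to a nilsystem factor.

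\textbf{Step 2 (the factor has order $k$).} Ergodicity of $Y$ is inherited from $X$. For order $k$, take $g\in L^\infty(\nu)$ with $\|g\|_{U^{k+1}(Y)}=0$. The key identity is
\[
\|g\circ\pi\|_{U^{k+1}(X)}=\|g\|_{U^{k+1}(Y)}.
\]
It follows by induction from the recursive definition: the $U^1$ case is $\int g\circ\pi\,d\mu=\int g\,d\nu$. For the inductive step,
\[
\overline{g\circ\pi}\,T^n(g\circ\pi)=(\overline g\,S^ng)\circ\pi,
\]
and the inductive hypothesis applies to $\overline g\,S^ng$. So $g\circ\pi=0$ $\mu$-a.e., hence $g=0$ $\nu$-a.e. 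Thus $Y$ is an ergodic system of order $k$. This is \cite[Proposition~17, Chapter~9]{hk-book}.

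\textbf{Step 3 (conclude).} Apply the hard direction of Theorem~\ref{hkz-thm} to conclude that $Y$ is isomorphic to a $k$-step pro-nilsystem.

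\textbf{Main obstacle.} Step~3 uses the full structure theorem, which is exactly what the paper wants to avoid. A structure-free route would go as follows. First reduce to $X$ a nilsystem $G/\Gamma$. To do this, write $Y$'s $\sigma$-algebra as the increasing limit of its intersections with the nilsystem factors $X_i$. I expect, though have not checked, that a factor of a pro-nilsystem is generated by the factors $\pi(X_i)$ up to limits.

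Next, show that any factor of a nilsystem $G/\Gamma$ is a quotient by a compact group of automorphisms. The plan is to consider the relative independent self-joining $\lambda=\mu\times_Y\mu$ on $X\times X$ and its ergodic decomposition. Each ergodic component is a self-joining of the nilsystem, hence by Ratner-type/Leibman results a sub-nilmanifold measure.

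The hard step is to show that the components near the diagonal are graph joinings of automorphisms of $X$. This is where a local rigidity statement for ergodic self-joinings near the diagonal would be proved and used. The automorphisms obtained should form a compact group $K$ commuting with $T$ whose invariant $\sigma$-algebra is that of $Y$. Finally, verify that $X/K$ is a $k$-step pro-nilsystem, which holds because automorphisms of nilsystems are affine and the quotient is again a nilmanifold, possibly after passing to an inverse limit.
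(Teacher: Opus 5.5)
Your Steps 1--3 are correct, but they are exactly the one-line deduction from Theorem~\ref{hkz-thm} that the paper recalls in its introduction and deliberately avoids. That deduction is short, but it rests on the full structure theorem. It would also make the paper's application circular, namely recovering Theorem~\ref{hkz-thm} from the Green--Tao--Ziegler inverse theorem via Tao's result. So the substance has to come from your structure-free route, and that route has two genuine gaps.

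First, your ``reduction to a nilsystem'' is not a reduction. Each $\pi^{-1}(\mathcal Y)\cap p_i^{-1}(\mathcal X_i)$ is a factor of the nilsystem $X_i$, hence already an ergodic $k$-step nilsystem by Parry's theorem (Theorem~\ref{thrm:factor_of_nilsystem_is_nilsystem}). So the claim you have not checked, $\pi^{-1}(\mathcal Y)=\bigvee_i\bigl(\pi^{-1}(\mathcal Y)\cap p_i^{-1}(\mathcal X_i)\bigr)$, by itself implies Theorem~\ref{thm-main} and carries its entire difficulty.

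The obstruction is that the presentation $X=\varprojlim X_i$ need not be compatible with $Y$: if $g$ is $X_i$-measurable, $\E(g\mid\pi^{-1}(\mathcal Y))$ need not be. The paper never proves your claim for the original $X_i$. Instead:
\begin{enumerate}
\item It uses Proposition~\ref{prop:inverse_limit_extension} and the intermediate factors $W_j=Y\vee Z_j$ to reduce to the case where $X\to Y$ is an extension by a compact abelian group $K$. There, conditional expectation onto $Y$ is averaging over the vertical rotations $V_u$.
\item It applies local rigidity (Proposition~\ref{prop:neighbourhood_diag_joining_graph}) to the self-joinings $(p_i,p_i\circ V_u)_*\mu$, together with continuity in $u$. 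This shows that the factor $X_i$ is invariant under an open subgroup $H_i\le K$.
\item The finite self-joining $\widetilde X_i=X_i^{R_i}$ over coset representatives of $K/H_i$ is then a nilsystem invariant under all of $K$.
\item Averaging over $K$ shows that the nilsystems $\pi^{-1}(\mathcal Y)\cap q_i^{-1}(\widetilde{\mathcal X}_i)$ generate $\pi^{-1}(\mathcal Y)$.
\end{enumerate}

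Second, the claim that every factor of a nilsystem is the quotient by a compact group of automorphisms is false. Such a quotient is always a compact extension. Already for $X=\T^2$ with $T(x,y)=(x+\alpha,y+x)$ and $Y$ the trivial factor this fails. Writing $e(t)=e^{2\pi i t}$, the functions $e(y)\circ T^n=e\bigl(y+nx+\binom n2\alpha\bigr)$ are orthonormal, so $X$ is not a compact extension of a point.

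Your joining argument shows concretely what goes wrong in this example. Here $\mu\times_Y\mu=\mu\times\mu$ and the quantity $x-x'$ is invariant. The ergodic components near the diagonal are graphs of vertical rotations $(x,y)\mapsto(x,y+t)$. These generate a circle group whose invariant $\sigma$-algebra is that of the $x$-coordinate, not that of $Y$.

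This step is also unnecessary, since factors of nilsystems are covered by Parry's theorem. The missing idea is where local rigidity belongs. It is not needed to describe factors of a single nilsystem. It is needed inside a compact abelian extension, where it makes the finite-stage factors $X_i$ compatible with the vertical rotations.
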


It has been an open problem in the subject whether
Theorem~\ref{thm-main} admits an independent proof. As Tao writes in
\cite{tao-weak}:\footnote{Here, Tao's Theorem~4 corresponds to
Theorem~\ref{thm-main}, while his Theorem~2 corresponds to
Theorem~\ref{hkz-thm}.}
\begin{quote}
Theorem 4 is, in principle, purely a fact about nilsystems, and should have an
independent proof, but this is not known; the only known proofs go through the
full machinery needed to prove Theorem 2.
\end{quote}
Similarly, Host and Kra write in \cite[p.~221]{hk-book}:
\begin{quote}
However, unfortunately the only proof that we know for this result relies on the ergodic Structure Theorem stated in Chapter 16,
and at this point we are not able to prove this in a more elementary manner.
\end{quote}

In this paper, we give such an independent proof. Our argument avoids the
machinery behind the structure theorem entirely and relies only on intrinsic
properties of nilsystems; see Section \ref{sec:proof} for a discussion of the innovations in our proof.
The fact that a factor of a nilsystem is again a
nilsystem was established earlier by Parry \cite{parry}; see also
\cite{lesigne,leibman}.

We obtain the following interesting application from our proof of Theorem \ref{thm-main}. 
In \cite{tao-weak}, Tao showed that the inverse theorem for the Gowers norms on
cyclic groups, due to Green, Tao, and Ziegler \cite{gtz},
implies that every measure-preserving dynamical system of order $k$ is a factor
of a $k$-step pro-nilsystem. Combined with our proof of
Theorem~\ref{thm-main}, this gives a new proof of Theorem~\ref{hkz-thm}, and
clarifies the logical relationship between the combinatorial and
ergodic-theoretic inverse theorems.

Finally, we record the following topological dynamical analogue of Theorem \ref{thm-main}.

\begin{theorem} \label{thrm:topological_version_main_thrm}
    Every factor of a transitive topological $k$-step pro-nilsystem is again a topological $k$-step pro-nilsystem. 
\end{theorem}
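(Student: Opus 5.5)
We establish Theorem~\ref{thrm:topological_version_main_thrm} by mirroring the strategy behind Theorem~\ref{thm-main}: a local rigidity statement for self-joinings near the diagonal, combined with an approximation argument through the inverse system. Let $(X,T)=\varprojlim (X_i,T_i)$ be a transitive topological $k$-step pro-nilsystem, where each $X_i=G_i/\Gamma_i$ is a nilsystem. Transitivity forces each $X_i$ to be minimal, since transitive nilsystems are minimal and uniquely ergodic. Hence $X$ is minimal and uniquely ergodic, with Haar measure $\mu$. Let $\pi\colon X\to Y$ be a topological factor map.

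We encode $\pi$ by the closed invariant relation $R_\pi=\{(x,x')\in X\times X:\pi(x)=\pi(x')\}$. This relation contains the diagonal, so $\overline{\mathcal O}$ of any diagonal point lies in $R_\pi$. The plan is to project $R_\pi$ to each level $X_i\times X_i$ and study the resulting invariant closed subsets. In a nilsystem, orbit closures of points in $X_i\times X_i$ under $T_i\times T_i$ are sub-nilmanifolds $H\cdot(x,x')$, by Leibman/Ratner-type results for nilmanifolds. So the projection of $R_\pi$ is a union of such sub-nilmanifolds.

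The key step is to show that, after passing far enough up the tower, the relation $R_\pi$ restricted to a small neighbourhood of the diagonal is a union of graphs of translations. These translations are by elements of a closed subgroup $K_i\le G_i$ commuting with $\tau$ modulo the appropriate stabilizer. This is the topological counterpart of the local rigidity theorem, namely that ergodic self-joinings close to the diagonal are graph joinings. Concretely, for $(x,x')\in R_\pi$ with $x'$ close to $x$, the orbit closure of $(x,x')$ is a minimal sub-nilsystem of $X_i\times X_i$ close to the diagonal in Hausdorff distance. Its Haar measure is then an ergodic self-joining close to the diagonal joining. By the measure-theoretic rigidity theorem, that self-joining is the graph of a nil-automorphism $g\Gamma_i\mapsto hg\Gamma_i$ with $h$ near the identity and centralizing $\tau$ modulo $\Gamma_i$.

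Since $R_\pi$ is an equivalence relation, these $h$ generate a group, and its closure is a compact group $K$ of automorphisms of $X$ commuting with $T$. I expect $K$ to be a limit of groups acting on the $X_i$, so that $X/K$ is again a pro-nilsystem: each $X_i/K_i$ is a nilsystem by Parry's theorem~\cite{parry}, since quotients of nilmanifolds by closed subgroups of the centralizer are nilmanifolds. One then checks that $Y$ is the quotient of $X/K$ by an equivalence relation that is \emph{discrete near the diagonal}.

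The main obstacle is this final step. A closed equivalence relation on the minimal pro-nilsystem $X/K$, trivial in a neighbourhood of the diagonal, yields a factor map that is a finite-to-one local homeomorphism. I would try to show that such factors are themselves pro-nilsystems. The idea is to lift the covering to a finite-index subgroup $\Gamma'\supseteq\Gamma_i$ of the lattice at a sufficiently high level: the fibers are finite and $T$-equivariant, hence given by a finite group of deck transformations commuting with $T$, i.e.\ translations by finitely many central-type elements. Quotienting a nilmanifold by such a finite group again yields a nilmanifold. Passing to the inverse limit expresses $Y$ as an inverse limit of $k$-step nilsystems. The uniformity needed in the ``close to the diagonal'' step is where compactness and minimality should be used carefully.
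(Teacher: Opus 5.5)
Your key step is false, not merely unproved. You claim that if $(x,x')\in R_\pi$ with $x'$ close to $x$, then the orbit closure of the pair is Hausdorff-close to the diagonal. Its Haar measure would then be close to $\mu_\Delta$, so that Proposition~\ref{prop:neighbourhood_diag_joining_graph} applies. That would require equicontinuity, and nilsystems of step $\geq 2$ are distal but not equicontinuous.

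The Heisenberg example in Section~\ref{sec:joining} is an explicit counterexample. The map $(x,y,z)\Gamma\mapsto x \bmod 1$ is a topological factor map of $X$ onto the rotation by $\alpha$ on $\T$. For $a=(0,s,0)$, the pair $(e_X,a\cdot e_X)$ lies in $R_\pi$ and tends to the diagonal as $s\to 0$. But for admissible $s$, which can be taken arbitrarily small, its orbit closure is the minimal subnilmanifold $\{(g\Gamma,agc\Gamma): g\in G,\ c\in C\}$. This set contains a whole central circle over every point. Hence it stays a fixed Hausdorff distance from the diagonal, and its Haar measure is not a graph joining. By the proof of Proposition~\ref{prop:neighbourhood_diag_joining_graph}, this Haar measure therefore stays outside a fixed weak-$*$ neighbourhood of $\mu_\Delta$. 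In particular, the pair lies on the graph of no continuous automorphism of $X$. So your description of $R_\pi$ near the diagonal fails, and even more blatantly when $Y$ is a point, where $R_\pi=X\times X$.

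The rest of your reduction collapses with it. In this example, the translations near the identity that commute with $T$ and preserve the fibres of $\pi$ are central, so your $K$ is the central circle and $X/K$ is the horizontal torus. The relation induced by $\pi$ on it, $\{((x,y),(x,y'))\}$, is not discrete near the diagonal. Note that the paper only applies local rigidity to the family $u\mapsto\lambda^{(i)}_u$. The weak-$*$ continuity of that family comes from strong continuity of the Koopman representation of a compact group action, not from pointwise closeness of pairs.

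Your final step has a separate flaw: a finite-to-one equivariant covering need not be a quotient by a group of deck transformations. For a non-normal finite-index $\Gamma\le\Gamma'$, the deck group of $G/\Gamma\to G/\Gamma'$ is $N_{\Gamma'}(\Gamma)/\Gamma$, which does not act transitively on fibres. Such $\Gamma$ exist already in the integer Heisenberg group. Nothing you say forces the deck maps to be ``central-type'' translations.

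The missing idea is that no topological rigidity is needed; the paper deduces the statement from the measure-theoretic Theorem~\ref{thm-main}:
\begin{enumerate}
\item Since $X$ is distal and transitive, it is minimal and uniquely ergodic with invariant measure $\mu$.
\item Theorem~\ref{thm-main} gives a measurable isomorphism $\Phi$ from $(Y,\pi_*\mu,S)$ to a uniquely ergodic topological pro-nilsystem $\widetilde Y$.
\item Proposition~\ref{prop:factors_between_pronilsystems} replaces $\Phi\circ\pi$, up to a null set, by a topological factor map $\widetilde\pi$.
\item On the distal uniquely ergodic system $Z=(\pi,\widetilde\pi)(X)\subseteq Y\times\widetilde Y$, both coordinate projections are measure-theoretic isomorphisms.
\item By Lemma~\ref{lem:upgrade_mble_to_top_isom}, both projections are then topological isomorphisms, so $Y\cong\widetilde Y$ topologically.
\end{enumerate}
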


This result is a consequence of the topological structure theorem of Host,
Kra, and Maass; see \cite[Theorem~1.2]{host2010nilsequences}. In their
terminology, a transitive topological \(k\)-step pro-nilsystem is a system of
order \(k\), and the class of systems of order \(k\) is closed under taking
factors. Thus a topological factor of a transitive \(k\)-step pro-nilsystem is
again a system of order \(k\), and hence an inverse limit of \(k\)-step
nilsystems. In Section~\ref{sec:topological}, we instead give a proof that relies only on
Theorem~\ref{thm-main}, whose proof is intrinsic to nilsystems and does not
invoke any structure theorem.

\subsection{Outline of the proof of Theorem \ref{thm-main}}\label{sec:proof}

Conceptually, our proof of Theorem~\ref{thm-main} has two main steps. First, we reduce the theorem to the case in which the given pro-nilsystem is a skew-product extension of the factor by a compact abelian group. We then prove this special case.

Let $X$ be an ergodic $k$-step pro-nilsystem, and let $\pi\colon X \to Y$ be a factor map. Proposition~\ref{prop:inverse_limit_extension} allows us to pass the tower of factors of each finite-stage nilsystem factor of $X$ to the inverse limit, thereby obtaining a tower
\[
X = Z_k \to Z_{k-1} \to \dots \to Z_1 \to Z_0 = \{*\},
\]
where each map $Z_j \to Z_{j-1}$ is an extension by a compact abelian group.

For each $0 \leq j \leq k$, we then introduce an intermediate factor $W_j$, defined as the smallest factor of $X$ of which both $Z_j$ and $Y$ are factors. In this way we obtain a tower
\[
X = W_k \to W_{k-1} \to \dots \to W_1 \to W_0 = Y.
\]
We prove by downward induction on $j$ that each $W_j$ is a $k$-step pro-nilsystem. Since $W_k = X$, the base case is immediate. For the induction step, the extension $Z_j \to Z_{j-1}$ induces an extension $W_j \to W_{j-1}$ as well. Thus the induction step is reduced precisely to the compact abelian extension case. Once this special case is established, the induction shows that $W_0 = Y$ is a $k$-step pro-nilsystem.

It therefore remains to prove the compact abelian extension case, namely Theorem~\ref{thrm:special_case}. Suppose that $X = \varprojlim X_i$ is an ergodic $k$-step pro-nilsystem, and that $X$ is a skew-product extension of $Y$ by a compact abelian group $K$. Writing $p_i\colon X \to X_i$ for the factor maps, the goal is to use the finite-stage nilsystems $X_i$ to construct an inverse limit presentation of $Y$.

The main obstruction is that the given inverse limit presentation of $X$ need not be compatible with the skew-product structure on $X$. More precisely, the vertical rotations $V_u$, for $u \in K$, do not in general descend to automorphisms of the systems $X_i$. Equivalently, the factors $X_i$ need not be invariant under all vertical rotations.

To handle this obstruction, for each $i$ and each $u \in K$, we consider the self-joining
\[
\lambda_u^{(i)} := (p_i, p_i \circ V_u)_* \mu \qquad \text{on } X_i \times X_i.
\]
If $\lambda_u^{(i)}$ is the graph joining of an automorphism of $X_i$, then $V_u$ does descend to an automorphism of $X_i$.

The crucial input here is the following local rigidity statement, where we denote by $J_e(X,X)$ the set of ergodic self-joinings of a nilsystem $X$. 

\begin{proposition}\label{prop:neighbourhood_diag_joining_graph}
Let $(X,\mu,T)$ be an ergodic nilsystem. Let $\mu_\Delta := (\mathrm{Id}_X,\mathrm{Id}_X)_*\mu$ be the diagonal joining. Then there exists $\delta>0$ such that, for every
$\lambda\in J_e(X,X)$, if\footnote{We view $J_e(X,X)$ as a subspace of the space $\mathrm{Pr}(X\times X)$  of Borel probability measures on the compact Hausdorff space $X\times X$ which, by the Riesz representation theorem, is naturally identified with a weak-* compact subset of the dual of $C(X\times X)$. 
Since $X\times X$ is metrizable, this topology is metrizable on $\mathrm{Pr}(X\times X)$, and we let $d$ be any metric inducing it.} $d(\lambda,\mu_\Delta)<\delta$, then $\lambda$ is the graph joining of an automorphism\footnote{By an automorphism of $(X,\mu,T)$ we mean an automorphism in the measurable sense, that is, an invertible measurable factor map from $(X,\mu,T)$ to itself, defined up to almost-everywhere equality.} of $(X,\mu,T)$. 

Equivalently, every ergodic self-joining of $(X,\mu,T)$ which is not the
graph joining of an automorphism lies at distance at least $\delta$ from the
diagonal joining. 
\end{proposition}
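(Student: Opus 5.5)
The strategy is to use the algebraic description of ergodic self-joinings of a nilsystem. Write $X=G/\Gamma$ with $T$ the translation by $\tau$; we may assume $G$ is generated by $G^\circ$ and $\tau$, so that $X$ is connected up to finitely many components. Then $X\times X=(G\times G)/(\Gamma\times\Gamma)$ is a nilmanifold with translation $(\tau,\tau)$. By the Leibman/Ratner-type description of invariant measures for nilrotations, every ergodic self-joining $\lambda\in J_e(X,X)$ is the Haar measure on a closed sub-nilmanifold. Concretely, $\lambda$ is the Haar measure of $H\cdot(x,y)$ for a closed subgroup $H\le G\times G$ such that $H\cap (g,h)(\Gamma\times\Gamma)(g,h)^{-1}$ is cocompact in $H$. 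Since $\lambda$ is $(\tau,\tau)$-invariant and ergodic, $(\tau,\tau)$ normalizes the orbit appropriately. Moreover, because both marginals equal $\mu$, each coordinate projection of $H$ acts transitively on $X$.

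The diagonal joining corresponds to $H_\Delta=\{(g,g)\}$ through the point $(e\Gamma,e\Gamma)$. The first main step is to show that sub-nilmanifolds whose Haar measures are weak-* close to $\mu_\Delta$ have subgroup $H$ of dimension at most $\dim G$. Suppose $\lambda_n\to\mu_\Delta$ with $\lambda_n=m_{H_n(x_n,y_n)}$. There are only countably many possibilities for $H_n$ up to conjugation by rational data, because $H_n$ must be a rational subgroup. Passing to a subsequence, we can use the result on limits of Haar measures of sub-nilmanifolds: weak-* limits of such measures are Haar measures of sub-nilmanifolds, and the limit subgroup contains conjugates of the $H_n$ for $n$ large. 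This is the nilmanifold analogue of the Mozes--Shah theorem, and for nilmanifolds it can be proved by hand via the Leibman equidistribution criterion on the horizontal torus. It follows that, for large $n$, $\dim H_n\le\dim H_\Delta=\dim G$ and $H_n\cap(G\times\{e\})$ is discrete.

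The second step turns this into a graph joining. If $H\cap(G\times\{e\})$ and $H\cap(\{e\}\times G)$ are discrete and both projections of $H$ are onto, then $H$ is, up to finite index, the graph of a continuous automorphism $\phi\colon G^\circ\to G^\circ$, that is, $H^\circ=\{(g,\phi(g))\}$. The sub-nilmanifold is then the graph of the map $g x\mapsto \phi(g)y$, possibly after finitely many sheets. The finite-sheet ambiguity has to be excluded by closeness. Near $\mu_\Delta$, the fibre measures $\lambda_x$ must be close to Dirac masses on average. A $k$-sheeted cover would spread each fibre over $k$ points that are separated by a fixed distance bounded below by the injectivity radius of $X$, and this separation is incompatible with being $\delta$-close. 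So the sheet number is $1$ and $\lambda$ is the graph of a measurable, indeed affine, bijection $S$ of $X$. Finally, $(\tau,\tau)$-invariance of the graph gives $S\circ T=T\circ S$, and $S$ preserves $\mu$ because both marginals are $\mu$. Hence $S$ is an automorphism of $(X,\mu,T)$.

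The main obstacle is the first step: uniformly ruling out "fatter" ergodic joinings that approximate the diagonal. For example, in the circle rotation, joinings supported on $\{(x,y):qy-qx\in c\}$ converge to Haar measure on $\T^2$, not to the diagonal, and this is why no fatter joining can be close to it. To make this rigorous I would first try a direct argument on the horizontal torus. A sub-nilmanifold's Haar measure has Fourier coefficients on horizontal characters equal to $0$ or of modulus $1$. The closeness $d(\lambda,\mu_\Delta)<\delta$ tested against finitely many characters $\chi\otimes\bar\chi$ then forces exact equality of these coefficients to $1$, which pins the projection to the diagonal torus. From there I would induct up the nilpotent filtration, using the vertical characters at each step of $G_k$.
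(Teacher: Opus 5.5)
There is a genuine gap where you exclude finitely many sheets, and this is exactly where the difficulty of the proposition lies. You claim the points of a multi-sheeted fibre are separated by a fixed distance bounded below by the injectivity radius. That is false, and nothing bounds the number of sheets a priori.

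Take the Heisenberg system of the paper's example ($\Gamma=\Z^3$), and set $a_q=(0,\tfrac1q,0)$ and $Y_q=\{(g\Gamma,ga_q\Gamma):g\in G\}=\Delta_G\cdot(e_X,a_q\Gamma)$.
\begin{itemize}
\item $Y_q\cong G/\Gamma_q$, where $\Gamma_q=\Gamma\cap a_q\Gamma a_q^{-1}=\{(x,y,z)\in\Z^3:q\mid x\}$.
\item $(\tau,\tau)$ acts ergodically on $Y_q$, because its horizontal rotation by $(\alpha,\beta)$ on $\R^2/(q\Z\times\Z)$ is ergodic. Hence the Haar measure $\lambda_q$ of $Y_q$ lies in $J_e(X,X)$.
\item The group of $Y_q$ is $\Delta_G$. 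So $\lambda_q$ passes everything your Step 1 delivers: dimension $\dim G$, trivial kernels, and the graph of $\mathrm{Id}_G$.
\item Yet the fibre over $e_X$ consists of the $q$ points $(0,\tfrac1q,\tfrac jq)\Gamma$, $0\le j<q$. Consecutive points differ by the central element $(0,0,\tfrac1q)$, so their minimal separation is $O(1/q)$.
\end{itemize}
The $\lambda_q$ are in fact far from $\mu_\Delta$. But this is only because the $q$ points fill out the central circle \emph{on average}, and no minimal-separation argument can detect that.

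What is missing is a lower bound for the \emph{average} pairwise Haar distance, uniform over all non-trivial subnilmanifolds. This is the paper's Lemma \ref{lem:no_small_subnilmanifolds_averaged}, proved by induction on the step using the no-small-subgroups property of $K=G_k/(G_k\cap\Gamma)$ and a difference-set argument. The paper combines it as follows:
\begin{itemize}
\item The fibres $Y_x=Y\cap(\{x\}\times X)$ are subnilmanifolds, and by homogeneity they are either all trivial or all non-trivial.
\item $\lambda$ disintegrates as $\lambda=\int_X m_{Y_x}\,d\mu(x)$.
\item If $\pi_1$ is not injective, this gives $c\le 2\int d_X(x,y)\,d\lambda$, which contradicts $\int d_X(x,y)\,d\lambda<c/2$.
\end{itemize}
This handles finite and positive-dimensional fibres at once. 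It also makes your Step 1 unnecessary, which matters because the Mozes--Shah-type limit theorem behind Step 1 is only asserted.

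Your closing Fourier idea could also be repaired, if you aim it at the fibres rather than at dimension:
\begin{itemize}
\item Over $(p\times p)(Y)$, the fibres of $Y$ are orbits of a single closed subgroup $M\le K\times K$.
\item For continuous $f\neq0$ with $f(ux)=\psi(u)f(x)$, the integral $\int f\otimes\bar f\,d\lambda$ vanishes unless $\psi\otimes\bar\psi$ is trivial on $M$. For $\mu_\Delta$ it equals $\|f\|_2^2$.
\item Testing a finite generating set of $\widehat K$ therefore forces $M\subseteq\Delta_K$.
\item Together with injectivity one step down, this gives injectivity of $\pi_1$.
\end{itemize}
For $\lambda_q$ one finds $M=\{(t,t+\tfrac jq):t\in\T,\ 0\le j<q\}$, and the integral is $0$. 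As written, however, neither route is carried out, and the argument you actually give for the sheets does not work.
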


The proof of this proposition is one of the main new ideas of the paper. 
The key fact is that an ergodic self-joining of a nilsystem must be the Haar measure of a subnilmanifold of the ambient product nilmanifold; see Proposition~\ref{prop:joining_nilsystems_is_nilsystem}.
Thus the problem becomes geometric.

The relevant geometric input is a no small subnilmanifolds lemma and its averaged form; see Lemma \ref{lem:no_small_subnilmanifolds} and Lemma \ref{lem:no_small_subnilmanifolds_averaged}.
Roughly speaking, Lemma \ref{lem:no_small_subnilmanifolds} says that nilmanifolds cannot contain arbitrarily small non-trivial subnilmanifolds, while Lemma \ref{lem:no_small_subnilmanifolds_averaged} gives a stronger version of this statement in terms of the average distance between two points in such a subnilmanifold. 
These results may therefore be viewed as analogues of the no small subgroups property of Lie groups, in the setting of nilmanifolds.

Suppose now that an ergodic self-joining of a nilsystem is sufficiently close to the diagonal joining.
This implies that the two coordinates of points in its support are close to each other on average.
If the joining were not a graph joining, then its supporting subnilmanifold would have a non-trivial coordinate fiber.
By the homogeneous structure of this subnilmanifold, this non-triviality cannot occur only at a single point: it persists throughout the support. 
But Lemma \ref{lem:no_small_subnilmanifolds_averaged} then gives a lower bound on the average distance between the two coordinates of points in the support.
This contradicts the assumed closeness to the diagonal joining.
Hence the joining must be the graph joining of an automorphism.

Using Proposition~\ref{prop:neighbourhood_diag_joining_graph}, we show for each $i$ that $\lambda_u^{(i)}$ is the graph joining of an automorphism for every $u$ in some open subgroup $H_i \leq K$. Choosing finitely many representatives for the quotient $K/H_i$, we then enlarge the system $X_i$ to a finite self-joining $\widetilde X_i$ which is invariant under all vertical rotations. For each $i$, let $\widetilde W_i$ be the largest common factor of $\widetilde X_i$ and $Y$. Since $\widetilde X_i$ is an ergodic finite self-joining of the nilsystem $X_i$, it is itself an ergodic $k$-step nilsystem, and hence so is its factor $\widetilde W_i$.

The invariance of the factors $\widetilde X_i$ under vertical rotations makes it possible to average over the group $K$, and from this we show that the factors $\widetilde W_i$ generate $Y$. It follows that $Y$ is the inverse limit of the systems $\widetilde W_i$, and therefore $Y$ is a $k$-step pro-nilsystem.

\subsection*{Acknowledgments}
The results of this paper comprise the Master's thesis of the first author.
We are grateful to Bryna Kra for several helpful comments and suggestions on
an earlier draft of the manuscript. We are also grateful to Sebasti\'an Donoso
for asking whether our methods could be applied in the setting of topological
dynamical systems. This question prompted us to prove
Theorem~\ref{thrm:topological_version_main_thrm}. 
AJ was funded by the German Research Foundation under Germany's Excellence Strategy -- EXC-2047 -- 390685813 and its Heisenberg Programme --  547294463.

\section{No small subnilmanifolds}\label{sec:nosmall}

Subnilmanifolds play an important role in the proof of Theorem~\ref{thm-main}. In this section, we introduce subnilmanifolds and prove a key lemma that will be used later.

\begin{definition}[Subnilmanifold]
    Let $X = G/\Gamma$ be a nilmanifold. A subset $Y \subseteq X$ is called a subnilmanifold of $X$ if $Y$ is closed in $X$ and there exist a closed subgroup $H \leq G$ and a point $x \in X$ such that
    \[
    Y = H \cdot x.
    \]
\end{definition}

Suppose that $Y = H \cdot x$ is a subnilmanifold of $X = G/\Gamma$. Then $H$ acts smoothly and transitively on $Y$ by left translations, and hence
\[
Y \cong H/\Stab_H(x).
\]
If $x = a\Gamma$ for some $a \in G$, then
\[
\Stab_H(x) = H \cap a\Gamma a^{-1},
\]
so that
\[
Y \cong H/(H \cap a\Gamma a^{-1}).
\]
Since $H$ is a closed subgroup of the nilpotent Lie group $G$, it is again a nilpotent Lie group. Moreover, $H \cap a\Gamma a^{-1}$ is discrete in $H$ because $\Gamma$ is discrete in $G$, and it is cocompact because the quotient is identified with the compact space $Y$. Thus $Y$ itself naturally has the structure of a nilmanifold.

A particularly important class of subnilmanifolds consists of those of the form $L \cdot e_X$, where $L$ is a rational subgroup of $G$, in the sense of \cite[Chapter~10, Lemma~14]{hk-book}; equivalently, $L \cdot e_X$ is closed in $X$. These are precisely the subnilmanifolds of $X$ containing the base point $e_X$.

Every subnilmanifold is a translate of one containing the base point. Indeed, if
\[
Y = H \cdot x \qquad \text{with } x = a\Gamma,
\]
and we set $L = a^{-1}Ha$, then
\[
Y = H \cdot (a\Gamma) = (Ha)\cdot e_X = a \cdot (L \cdot e_X).
\]
Since left translation by $a$ is a homeomorphism, $L \cdot e_X$ is closed whenever $Y$ is closed. Hence $L$ is rational.

One characteristic property of Lie groups is that they have no small subgroups: every Lie group admits an identity neighbourhood containing no non-trivial subgroup. The following lemma may be viewed as a nilmanifold analogue of this fact. 

\begin{lemma}[No small subnilmanifolds] \label{lem:no_small_subnilmanifolds}
    Let $X = G/\Gamma$ be a $k$-step nilmanifold, and let $d_X$ be a compatible metric on $X$. Then there exists $\eps > 0$ such that any subnilmanifold $Y \subseteq X$ with
    \[
    \diam_X(Y) < \eps
    \]
    consists of a single point.
\end{lemma}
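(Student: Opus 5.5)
We will reduce to the classical no small subgroups property of the Lie group $G$, combined with compactness of $X$. Fix a compact set $F \subseteq G$ whose image under the projection $G \to X$ is all of $X$; such a set exists because $X$ is compact. Also fix a neighbourhood $U$ of the identity in $G$ containing no non-trivial subgroup.

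The compact-uniform step is to produce $\eps>0$ with the following property: for every $a\in F$ and every $g\in G$,
\[
d_X(g a\Gamma, a\Gamma)<\eps \;\Longrightarrow\; g\in V\cdot a\Gamma a^{-1},
\]
where $V$ is a small identity neighbourhood chosen with $V^{-1}V\subseteq U$. The reason is that the map $G\to X$ is a local homeomorphism uniformly over compact sets, since $\Gamma$ is discrete. Equivalently, the points $a^{-1}ga\Gamma$ lie close to $e_X$, so $a^{-1}ga\in V'\Gamma$ for a small $V'$. Conjugating back by $a\in F$ costs only a uniformly bounded distortion, because $F$ is compact.

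Next, let $Y=H\cdot x$ have diameter $<\eps$, and write $x=a\Gamma$ with $a\in F$. Set $L=a^{-1}Ha$, so that $Y=a\cdot(L\cdot e_X)$. Since $d_X$ is equivalent under the homeomorphism $a\cdot$ and $a$ ranges over the compact set $F$, after shrinking $\eps$ we may assume that $L\cdot e_X$ has small diameter in a fixed metric. The problem is thus reduced to the case $x=e_X$, where $L$ is rational.

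We then need to show that if $L\cdot e_X$ lies in a small ball $B$ around $e_X$, then $L\subseteq\Gamma$. Choose $B$ so that its preimage in $G$ is the disjoint union $\bigsqcup_{\gamma\in\Gamma}V\gamma$, with $V$ as above and the translates $V\gamma$ pairwise disjoint. Since $L$ is connected-or-not, we consider its identity component $L^\circ$. The set $L^\circ$ is connected, contains $e$, and is contained in $\bigsqcup_\gamma V\gamma$, so $L^\circ\subseteq V$. Hence $L^\circ$ is a subgroup inside $U$, and therefore $L^\circ=\{e\}$. So $L$ is discrete, and $L\cdot e_X\cong L/(L\cap\Gamma)$ is compact and discrete, hence finite.

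A finite orbit $L\cdot e_X$ cannot a priori be excluded by connectedness, so the finite case needs separate treatment. Write $\phi(\ell)=v_\ell$ for the $V$-component of $\ell=v_\ell\gamma_\ell$. Then $\{v_\ell\}$ is a finite set inside $V$ representing the finitely many points of $L\cdot e_X$. The key observation is that the subgroup $L$ normalizes nothing obvious. Instead we use the map $\ell\mapsto \ell\Gamma$ to see that $L\Gamma$ is a finite union of cosets $v_i\Gamma$. We then consider the cyclic subgroup generated by a single element $\ell=v\gamma$. Its orbit $\{\ell^n e_X\}$ is finite and lies in $B$. In the nilpotent group $G$, some power $\ell^m$ lies in $\Gamma$ (stabilizer of finite index), so $\ell$ is a root of an element of $\Gamma$. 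By the Mal'cev theory of rational structure, the set of such $\ell$ modulo $\Gamma$ near $e$ is the set of points of $X$ with finite orbit under roots. We then appeal to the fact that for a uniform lattice in a simply connected nilpotent Lie group, the elements $g$ with $g^m\in\Gamma$ for some $m$ form a subgroup-type set $\sqrt{\Gamma}$ in which $\Gamma$ has finite index in each finitely generated piece, and such elements are bounded away from $\Gamma$ except at $\Gamma$ itself.

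This last step is the main obstacle. The claim needed is that the torsion points $\{g\Gamma : g^m\in\Gamma \text{ for some } m\}$ do not accumulate in a way that allows a non-trivial finite orbit to fit in a tiny ball. The first approach to try is induction on the nilpotency step via the central fibration $X\to G/G_k\Gamma$. On the abelian torus base, a finite subgroup orbit in a small ball of a torus must be trivial: by no small subgroups of the torus, a finite subgroup of $\T^d$ inside a small ball is trivial. This forces $L\subseteq G_k\Gamma$. Then $L\cdot e_X$ lies in a single torus fiber $G_k\Gamma/\Gamma$, where $L$ acts by translations through an abelian quotient, and the torus argument applies again. Uniformity of $\eps$ over the induction is preserved because there are only $k$ steps and each step uses a fixed compact torus.
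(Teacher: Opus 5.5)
Your reductions are fine as far as they go: moving the orbit to the base point via the compact set $F$, and the connectedness argument giving $L^\circ=\{e\}$ and $L\cdot e_X$ finite. But they only push all the content of the lemma into the finite-orbit case, and that case is not proved.

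\textbf{The false step.} The assertion that elements $g\notin\Gamma$ with $g^m\in\Gamma$ are bounded away from $\Gamma$ is false already for $G=\R$, $\Gamma=\Z$, where $g=1/m$ tends to $0$. What rules out a small non-trivial finite orbit is not where individual torsion points sit. It is the fact that the orbit is the image of a group: for $L=\tfrac1m\Z$ the orbit $L\cdot e_X$ is spread evenly over the whole circle. So the no-small-subgroups argument has to be run in compact abelian groups.

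\textbf{The last paragraph only covers $k=2$.} That paragraph attempts exactly this, but as written it handles only the case $k=2$.
\begin{itemize}
\item For $k\ge 3$ the base $Z=G/(G_k\Gamma)$ is a $(k-1)$-step nilmanifold, not a torus. So $L\subseteq G_k\Gamma$ must come from an induction hypothesis for the full lemma, applied to the subnilmanifold $p(L\cdot e_X)$ of $Z$. This is the orbit of the closure of the image of $L$ in $G/G_k$, and its diameter is small by uniform continuity of $p$.
\item In the fiber step, ``acts by translations through an abelian quotient'' needs to be made precise. Since $G_k$ is central, $c\gamma\mapsto c(G_k\cap\Gamma)$ is a well-defined homomorphism from $L\subseteq G_k\Gamma$ to $K=G_k/(G_k\cap\Gamma)$. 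Its image $L'$ satisfies $L\cdot e_X=L'\cdot e_X$.
\item $L'$ lies in a small neighbourhood of $0$, because $u\mapsto u\cdot e_X$ embeds $K$ into $X$. No small subgroups in the compact abelian Lie group $K$ then finishes the fiber step.
\end{itemize}

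\textbf{Comparison with the paper.} Once the induction is set up this way, it handles every closed subgroup $L$ directly. The no-small-subgroups argument in $G$, the connectedness step and the finite-orbit detour then become superfluous. The corrected argument is essentially the paper's. The paper proves the averaged Lemma~\ref{lem:no_small_subnilmanifolds_averaged} by the same induction over $p\colon X\to G/(G_k\Gamma)$, distinguishing whether $p(Y)$ is a point. It treats the fiber case $Y=L\cdot a$, $L\le K$, with Lemma~\ref{lem:no_small_subgroups_averaged}, whose constant is uniform in $a$ because the $K$-action is free. The present lemma then follows because the average distance is at most the diameter. Your reduction to the base point $e_X$ is a legitimate substitute for that uniformity in $a$.
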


This lemma gives a uniform lower bound on the diameter of subnilmanifolds which do not consist of a single point.
We will refer to such subnilmanifolds as non-trivial. 
For the proof of Proposition \ref{prop:neighbourhood_diag_joining_graph}, however, we will need the following stronger averaged form of this statement. 
We will prove this averaged form directly, and then recover Lemma \ref{lem:no_small_subnilmanifolds} as a consequence. 

\begin{lemma}[No small subnilmanifolds, averaged form] \label{lem:no_small_subnilmanifolds_averaged}
    Let $X = G/\Gamma$ be a $k$-step nilmanifold, and let $d_X$ be a compatible metric on $X$. 
    Then there exists $c > 0$ such that for every non-trivial subnilmanifold $Y \subseteq X$, 
    \[
    \int_{Y \times Y} d_X(x,y) \,d m_Y(x) \,d m_Y(y) \geq c,
    \]
    where $m_Y$ denotes the Haar probability measure of $Y$.
\end{lemma}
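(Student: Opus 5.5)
Our plan is to argue by contradiction and compactness. Suppose there are non-trivial subnilmanifolds $Y_n \subseteq X$ with $\int_{Y_n\times Y_n} d_X\,dm_{Y_n}\,dm_{Y_n} \to 0$. Write $Y_n = a_n L_n \cdot e_X$ with $L_n$ rational and, by compactness of $X$ and a fundamental domain, $a_n$ ranging in a fixed compact set $F \subseteq G$. Since left translation by $a\in F$ is uniformly Lipschitz with respect to a fixed right-invariant Riemannian metric (or at least uniformly equicontinuous, with uniformly continuous inverses), the averaged distance for $Y_n$ is comparable, up to a modulus of continuity, to that of $L_n\cdot e_X$. So it suffices to treat subnilmanifolds through the base point, $Y = L\cdot e_X$ with $L$ a closed rational subgroup and $m_Y$ the $L$-invariant Haar measure.

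The key structural input is that there are only countably many rational subgroups, but this is not enough by itself. Instead, we use that the measures $m_{Y_n}$ are invariant under the connected group $L_n^\circ$, and we pass to a weak-* limit $m$ of $m_{Y_n}$ along a subsequence. The vanishing of the averaged distance forces $\int d_X\,dm\,dm = 0$, so $m = \delta_{x_0}$ is a Dirac mass. Now we exploit the homogeneity. Let $\mathfrak l_n \subseteq \mathfrak g$ be the Lie algebra of $L_n$; since $Y_n$ is non-trivial and $\Gamma$ is discrete, $\mathfrak l_n \neq 0$ (if $L_n^\circ$ were trivial, $L_n$ would be discrete and $Y_n$ finite; we handle finite orbits separately, using that a finite $L_n$-orbit of $e_X$ in a nilmanifold with $L_n\cap\Gamma$ of finite index yields points $g\Gamma$, $g\in L_n$, whose distance from $e_X$ is bounded below by the injectivity radius of the covering $G\to X$, so uniform mass is spread over points at distance $\geq r_0$). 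Pick unit vectors $v_n \in \mathfrak l_n$ and pass to a limit $v \neq 0$. Each $m_{Y_n}$ is invariant under $\exp(tv_n)$, hence $m$ is invariant under $\exp(tv)$ for all $t$. But a Dirac mass $\delta_{x_0}$ invariant under the one-parameter group $\exp(tv)$ forces $\exp(tv) \in \Stab_G(x_0)$, a discrete conjugate of $\Gamma$, which is impossible for $v\neq 0$. This is the contradiction.

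The step we expect to be delicate is the discrete/finite case, and more generally making the contradiction quantitative when $L_n^\circ$ is non-trivial but the orbit is "almost" collapsed; the invariance argument above handles the latter cleanly, so the main work is the finite-orbit case. There we use that $L_n\cdot e_X$ finite with at least two points means it contains a point $g\Gamma \neq e_X$, and uniform measure on $N\ge 2$ points $x_1,\dots,x_N$ which form an orbit of a group acting by isometries-up-to-constants; we would bound $\int d\,dm\,dm \geq \frac{1}{N^2}\sum_{i\ne j} d(x_i,x_j)$ and need a lower bound independent of $N$. For this we use that the orbit is a coset space of a finite group acting transitively, so every point has the same distance profile up to the uniform Lipschitz constant of translations by elements of a compact set; combined with a uniform lower bound $r_0$ on the separation of distinct points in a $\Gamma$-orbit structure near the identity (discreteness of $\Gamma$), this gives that at most a bounded fraction of the orbit lies within $r_0/C$ of any given point, yielding $\int d\,dm\,dm\geq c$. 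Finally, Lemma~\ref{lem:no_small_subnilmanifolds} follows since $\int d_X\,dm_Y\,dm_Y \le \diam_X(Y)$, so taking $\eps = c$ works.
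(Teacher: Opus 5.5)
Your argument for the case where infinitely many $L_n$ have non-trivial identity component is correct, and it differs from the paper's. Invariance of $m_{Y_n}$ under the whole one-parameter group $\{\exp(tv_n) : t\in\R\}$ passes to the weak-* limit. A Dirac mass cannot be fixed by a non-trivial one-parameter subgroup, since $\Stab_G(x_0)$ is a conjugate of $\Gamma$ and hence discrete.

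The gap is the remaining case, where $L_n$ is discrete and $Y_n$ is a finite set of $N_n\geq 2$ points. You acknowledge this is the main work but do not prove it, and the ingredients you invoke there fail:
\begin{itemize}
\item There is no uniform lower bound on the separation of distinct points of a finite subnilmanifold. In $\T=\R/\Z$ the subnilmanifold $\{j/N : 0\le j<N\}$ has neighbouring points at distance $1/N$. Discreteness of $\Gamma$ only separates distinct lifts of a single point.
\item The claim that all points of $Y_n$ have the same distance profile up to a uniform constant would need the transitive action of $L_n$ on $Y_n$ to be realised by elements from a fixed compact subset of $G$. But the elements of $L_n$ carrying one point of $Y_n$ to another form a coset of a conjugate of $L_n\cap\Gamma$. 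Such a coset need not meet any fixed compact set, and nothing in your sketch controls this. For non-abelian $G$, left translation by far-out elements is not uniformly Lipschitz on $X$.
\item Even granting a point of $Y_n$ at distance $\geq r_0$ from $e_X$, this gives no lower bound on the \emph{proportion} of $m_{Y_n}$-mass lying far away, which is what the averaged inequality needs. That point's existence is itself unproved; it is the non-averaged lemma for finite $Y_n$.
\item Your compactness scheme does not rescue this case. A weak-* limit of uniform measures on finite sets inherits no one-parameter invariance. Limits of elements $g_n\in L_n$ only land in $\Stab_G(x_0)$, which is consistent with $m=\delta_{x_0}$. A contradiction would require $g_n\in L_n$ in a fixed compact set moving points near $x_0$ by a definite amount, which is essentially the statement being proved.
\end{itemize}

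What is missing is a genuine no-small-subgroups argument that covers finite orbits. The paper supplies one by induction on the step, uniformly for all subnilmanifolds, so no connected/discrete dichotomy is needed. It projects $p\colon X\to Z=G/(G_k\Gamma)$, whose fibres are the orbits of the free action of the compact abelian Lie group $K=G_k/(G_k\cap\Gamma)$.
\begin{itemize}
\item If $p(Y)$ is non-trivial, it is a subnilmanifold of the $(k-1)$-step nilmanifold $Z$ with $p_*m_Y=m_{p(Y)}$. The inductive bound then transfers to $Y$ via uniform continuity of $p$.
\item If $p(Y)$ is a point, centrality of $G_k$ shows $Y=L\cdot a$ for a non-trivial closed subgroup $L\le K$, with $m_Y$ the image of $m_L$. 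Choose an identity neighbourhood $V$ with $VV^{-1}$ containing no non-trivial subgroup. A Borel subset of $L$ of Haar measure $>1/2$ has difference set all of $L$, so $m_L(L\cap V)\le 1/2$. Freeness and compactness then give a uniform lower bound on $d_X(u\cdot x,x)$ for $u\notin V$, hence the averaged bound.
\end{itemize}
It is precisely this difference-set step that excludes collapsing finite orbits, such as finite subgroups of the central torus. You need an argument of this kind to close the finite case.
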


Indeed, Lemma \ref{lem:no_small_subnilmanifolds_averaged} implies Lemma \ref{lem:no_small_subnilmanifolds}.
For every subnilmanifold $Y \subseteq X$ with Haar probability measure $m_Y$, 
\[
\int_{Y \times Y} d_X(x,y) \,d m_Y(x) \,d m_Y(y) \leq \diam_X(Y).
\]
Thus if $c > 0$ is the constant from Lemma \ref{lem:no_small_subnilmanifolds_averaged}, then every non-trivial subnilmanifold $Y \subseteq X$ satisfies
\[
\diam_X(Y) \geq c.
\]
Consequently, any subnilmanifold $Y \subseteq X$ with $\diam_X(Y) < c$ must be a singleton. 

The proof of Lemma \ref{lem:no_small_subnilmanifolds_averaged} will use the following simple consequence of the no small subgroups property for compact abelian Lie groups acting freely on compact metric spaces. 

\begin{lemma} \label{lem:no_small_subgroups_averaged}
    Let $K$ be a compact abelian Lie group acting freely and continuously on a compact metric space $X$ with metric $d_X$.
    Denote the action by 
    \[
    K \times X \to X, \qquad (u, x) \mapsto u \cdot  x.
    \]
    Then there exists $c > 0$ such that for every $x \in X$, and every non-trivial closed subgroup $L \leq K$, 
    \[
    \int_{L \times L} d_X(u \cdot x, v \cdot x) \,d m_L(u) \,d m_L(v) \geq c,
    \]
    where $m_L$ denotes the Haar probability measure of $L$.
\end{lemma}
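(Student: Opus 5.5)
We argue by contradiction combined with compactness, using the no small subgroups property of the compact abelian Lie group $K$. Suppose no such $c>0$ exists. Then there are points $x_n\in X$ and non-trivial closed subgroups $L_n\leq K$ with
\[
F(x_n,L_n):=\int_{L_n\times L_n} d_X(u\cdot x_n, v\cdot x_n)\,dm_{L_n}(u)\,dm_{L_n}(v)\longrightarrow 0 .
\]
The first reduction is to replace each $L_n$ by a subgroup that is not too small. Since $K$ is a Lie group, it has an identity neighbourhood $U$ containing no non-trivial subgroup. Each non-trivial closed $L_n$ is therefore not contained in $U$. We would like a uniformly ``large'' subgroup, so we work in the space of closed subgroups of $K$ with the Chabauty (Hausdorff) topology, which is compact. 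Passing to a subsequence, we may assume $x_n\to x$ and $L_n\to L$ for some closed subgroup $L$. Since each $L_n$ contains a point of the compact set $K\setminus U$, the limit $L$ contains such a point as well, so $L$ is non-trivial.

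The key step is to show that Haar measures converge along Chabauty convergence, i.e.\ $m_{L_n}\to m_L$ weak-* on $K$. Any weak-* limit point $\nu$ of $m_{L_n}$ is supported on $L$. It is also invariant under translation by every $\ell\in L$: write $\ell=\lim \ell_n$ with $\ell_n\in L_n$, use the invariance of $m_{L_n}$ under $\ell_n$, and use uniform continuity of test functions on $K$. Uniqueness of Haar measure then gives $\nu=m_L$. I expect this to be the main technical point. It is standard but requires care with the Chabauty topology. If the argument becomes awkward, an alternative is to use the structure $K\cong \T^d\times F$ with $F$ finite and pass to a limit of the characters annihilating $L_n$.

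Given this convergence, the map $(u,v,y)\mapsto d_X(u\cdot y,v\cdot y)$ is continuous on the compact space $K\times K\times X$, hence uniformly continuous. Together with $m_{L_n}\otimes m_{L_n}\to m_L\otimes m_L$ and $x_n\to x$, this gives $F(x_n,L_n)\to F(x,L)$. Hence $F(x,L)=0$, so $d_X(u\cdot x,v\cdot x)=0$ for $m_L\otimes m_L$-almost every $(u,v)$. By continuity and full support of Haar measure on $L$, this holds for all $u,v\in L$, so $u\cdot x=x$ for all $u\in L$. Freeness of the action then forces $L=\{e\}$, contradicting the non-triviality of $L$. This yields the desired uniform constant $c>0$.
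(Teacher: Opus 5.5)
Your proof is correct, but it takes a different route from the paper's. Yours is a soft compactness argument; the paper's is direct and quantitative.

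The paper proceeds as follows. It takes the no-small-subgroups neighbourhood $U$ and an open $V$ with $VV^{-1}\subseteq U$. Freeness and compactness of $(K\setminus V)\times X$ give $d_X(u\cdot x,x)\geq 2c$ for all $u\notin V$. The key observation is that $m_L(L\cap V)\leq 1/2$ for every non-trivial closed $L$: otherwise $(L\cap V)(L\cap V)^{-1}=L$, since any Borel set of Haar measure greater than $1/2$ has full difference set, and then $L\subseteq U$. After the substitution $u\mapsto uv$, the double integral is therefore at least $2c\,m_L(L\setminus V)\geq c$.

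You instead show that $(x,L)\mapsto F(x,L)$ is jointly continuous on $X\times\mathrm{Sub}(K)$. This uses Chabauty compactness and the weak-$*$ convergence $m_{L_n}\to m_L$, which you establish correctly. You then note that $F(x,L)$ vanishes only when $L$ fixes $x$. The no-small-subgroups property enters exactly to say that $\{e\}$ is isolated in $\mathrm{Sub}(K)$, so the non-trivial subgroups form a compact set on which $F$ is positive.

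The paper's route is shorter and needs no topology on the space of subgroups. It also gives an explicit constant: half the minimal displacement of points by elements outside $V$. Your route is conceptually clean (a positive continuous function on a compact set), but it is non-constructive and relies on more machinery.

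Neither argument uses that $K$ is abelian. Your fallback via characters annihilating $L_n$ is not needed.
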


\begin{proof}
    Since $K$ is a Lie group, it admits an identity neighbourhood $U$ containing no non-trivial subgroup. 
    Choose an identity neighbourhood $V \subseteq K$ such that 
    \[
    V V^{-1} \subseteq U. 
    \]
    Since the $K$-action is free, the continuous map
    \[
    (u,x) \mapsto d_X(u \cdot x, x)
    \]
    is strictly positive on the compact set $(K \setminus V) \times X$.
    Hence there exists $c > 0$ such that 
    \[
    d_X (u \cdot x, x) \geq 2 c
    \]
    for all $u \in K \setminus V$ and all $x \in X$.

    Now let $L \leq K$ be a non-trivial closed subgroup.
    We claim that 
    \[
    m_L(L \cap V) \leq 1/2.
    \]
    Indeed, if $m_L(L \cap V) > 1/2$, then 
    \[
    (L \cap V) (L \cap V)^{-1} = L,
    \]
    because any Borel subset of a compact group with Haar measure $> 1/2$ has difference set equal to the whole group. 
    This would imply that 
    \[
    L \subseteq V V^{-1} \subseteq U,
    \]
    contradicting the fact that $L$ is non-trivial.

    Therefore, for any $x \in X$, using invariance of the Haar measure $m_L$, we get that  
    \begin{align*}
        &\int_{L \times L} d_X(u \cdot x, v \cdot x) \,d m_L(u) \,d m_L(v) \\
        &= \int_L \left(
        \int_L d_X(u \cdot (v \cdot x), v \cdot x) \,d m_L(u) \right) \,d m_L(v) \\
        &\geq \int_L 2c \ m_L(L \setminus V) \,d m_L(v) \\
        &\geq c.
    \end{align*} 
\end{proof}

We now prove the averaged no small subnilmanifolds lemma.

\begin{proof}[Proof of Lemma \ref{lem:no_small_subnilmanifolds_averaged}]
    We argue by induction on the step $k$ of the nilmanifold $X$.

    First suppose that $k = 1$. 
    Then $X = G/\Gamma$ is a compact abelian Lie group.
    Applying Lemma \ref{lem:no_small_subgroups_averaged} to the translation action of $X$ on itself, we obtain $c > 0$ such that for every $a \in X$, and every non-trivial closed subgroup $L \leq X$, 
    \[
    \int_{L \times L} d_X(u \cdot a, v \cdot a) \,d m_L(u) \,d m_L(v) \geq c. 
    \]
    Let $Y \subseteq X$ be a non-trivial subnilmanifold.
    Then 
    \[
    Y = H \cdot a = \pi(H) \cdot a 
    \]
    for some closed subgroup $H \leq G$ and some $a \in X$. 
    Since $Y$ is closed, we equivalently have that $Y = L\cdot a$ where $L := \overline{\pi(H)} \leq X$.
    The closed subgroup $L$ is non-trivial, since $Y$ is non-trivial. 
    Moreover, the Haar measure $m_Y$ on $Y$ is the pushforward of $m_L$ under the map $u \mapsto u\cdot a$.
    Hence 
    \[
    \int_{Y \times Y} d_X(x,y) \,d m_Y(x) \,d m_Y(y)
    = \int_{L \times L} d_X(u \cdot a, v \cdot a) \,d m_L(u) \,d m_L(v) \geq c.
    \]
    This proves the $1$-step case.

    Now suppose that $k \geq 2$ and that the statement has been proved for $(k-1)$-step nilmanifolds.
    Let $G_k$ denote the last non-trivial term of the lower central series of $G$, and define
    \[
    Z := G/(G_k \Gamma) 
    \cong (G/G_k)/((G_k\Gamma)/G_k) 
    \cong G_k \backslash X.
    \]
    Then $Z$ is a $(k-1)$-step nilmanifold.
    Let $p \colon X \to Z$ be the natural projection, and choose a compatible metric $d_Z$ on $Z$. 
    We may assume without loss of generality that $d_Z \leq 1$.
    
    By the induction hypothesis, there exists $c' >0$ such that for every non-trivial subnilmanifold $W \subseteq Z$,
    \[
    \int_{W \times W} d_Z(z,w) \,d m_W(z) \,d m_W(w) \geq c'. 
    \]
    By compactness, $p$ is uniformly continuous, so there exists $\delta > 0$ such that 
    \[
    d_X(x,y) < \delta \implies d_Z(p(x), p(y)) < c'/2 
    \]
    for all $x,y \in X$.

    Next, let
    \[
    K := G_k/(G_k \cap \Gamma).
    \]
    Since $G_k$ is central and a rational subgroup of $G$, $K$ is a compact abelian Lie group. 
    It acts freely and continuously on $X$ by left translations, and the orbits of this action are precisely the fibers of $p$.

    Applying Lemma \ref{lem:no_small_subgroups_averaged} to this $K$-action, we obtain $c'' > 0$ such that for every $a \in X$, and every non-trivial closed subgroup $L \leq K$, 
    \[
    \int_{L \times L} d_X(u \cdot a, v \cdot a) \,d m_L(u) \,d m_L(v) \geq c''.
    \]

    We claim that 
    \[
    c := \min\{\delta c'/2, c''\} 
    \]
    satisfies the desired condition.

    Let $Y \subseteq X$ be a non-trivial subnilmanifold.
    Write 
    \[
    Y = H \cdot a
    \]
    for some closed subgroup $H \leq G$ and some $a \in X$.
    We distinguish two cases: either $p(Y)$ is also non-trivial, or $p(Y)$ is a single point.
    
    Suppose first that $p(Y)$ is non-trivial.  
    Let $H'$ be the closure of the image of $H$ under the quotient homomorphism $G \to G/G_k$.
    Since $Y$ is compact and $p$ is continuous, $p(Y)$ is closed in $Z$.
    Therefore under the identification $Z \cong (G/G_k)/((G_k\Gamma)/G_k)$, we see that 
    \[
    p(Y) = H' \cdot p(a),
    \]
    so $p(Y)$ is a subnilmanifold of $Z$.
    We claim that 
    \[
    p_* m_Y = m_{p(Y)},
    \]
    where $m_Y$ and $m_{p(Y)}$ denote the Haar measures of $Y$ and $p(Y)$ respectively. 
    Indeed, $p_* m_Y$ is a Borel probability measure on $p(Y)$, which is invariant under the translations induced by the image of $H$ in $G/G_k$, and hence under the translations by all elements of $H'$.
    The claim then follows by uniqueness of the Haar measure. 

    By our choice of $c'$, we now have that 
    \[
    c' \leq \int_{p(Y) \times p(Y)} d_Z(z,w) \,d m_{p(Y)}(z) \,d m_{p(Y)}(w). 
    \]
    Using $p_* m_Y = m_{p(Y)}$, this becomes 
    \[
    c' \leq \int_{Y \times Y} d_Z(p(x),p(y)) \,d m_Y(x) \,d m_Y(y).
    \]
    We split the integral according to whether $d_X(x,y) < \delta$ or $d_X(x,y) \geq \delta$.
    On the first region, the integrand is bounded by $c'/2$.
    On the second region, since $d_Z \leq 1$ and $d_X(x,y) \geq \delta$, we have 
    \[
    d_Z(p(x), p(y)) \leq 1 \leq \frac{1}{\delta} d_X(x,y).
    \]
    Therefore 
    \[
    c' \leq c'/2 + \frac{1}{\delta} \int_{Y \times Y} d_X(x,y) \,d m_Y(x) \,d m_Y(y).
    \]
    Hence 
    \[
    \int_{Y \times Y} d_X(x,y) \,d m_Y(x) \,d m_Y(y) \geq \delta c'/2 \geq c.
    \]

    It remains to consider the case where $p(Y)$ is a singleton.
    Then $Y$ is contained in a single fiber of $p$, equivalently $Y$ is contained in a single orbit of $K$.
    Since $a \in Y$, this implies that 
    \[
    Y \subseteq K \cdot a.
    \]
    Hence, we have that 
    \[
    Y = L \cdot a,
    \]
    where 
    \[
    L := \{u \in K : u \cdot a \in Y\}.
    \]

    The set $L$ is closed in $K$, because $Y$ is closed and the map $u \mapsto u \cdot a$ is continuous. 
    We show that $L$ is a subgroup of $K$.

    Let $u_1, u_2 \in L$. 
    Then there exist $h_1, h_2 \in H$ such that 
    \[
    u_1 \cdot a = h_1 \cdot a, \qquad u_2 \cdot a = h_2 \cdot a.
    \]
    Since $G_k$ is central, the action of $K$ commutes with all translations by elements of $G$.
    Thus $u_2^{-1} \cdot a = h_2^{-1} \cdot a$, and so 
    \[
    u_1 u_2^{-1} \cdot a = u_1 \cdot (h_2^{-1} \cdot a) = h_2^{-1} \cdot (u_1 \cdot a) = h_2^{-1} h_1 \cdot a \in Y.
    \]
    Therefore $u_1 u_2^{-1} \in L$, so $L$ is a subgroup of $K$.
    Since $Y$ is non-trivial, and $Y = L \cdot a$, $L$ is non-trivial. 

    We claim that $m_Y$ is the pushforward of the Haar measure $m_L$ under the map 
    \[
    L \to Y, \qquad u \mapsto u \cdot a.
    \]
    Indeed, if $h \in H$, then there exists a unique $u_0 \in L$ such that 
    \[
    h \cdot a = u_0 \cdot a.
    \]
    But then using the fact that the $K$-action commutes with all translations, 
    \[
    h \cdot (u \cdot a) = u \cdot (h \cdot a) = u_0 u \cdot a, 
    \]
    for all $u \in L$.
    Hence, the invariance of $m_L$ implies that its pushforward is invariant under all translations by elements of $H$. 
    By uniqueness of Haar measure, the pushforward must be $m_Y$.
    
    Therefore, by our choice of $c''$,
    \[
    \int_{Y \times Y} d_X(x,y) \,d m_Y(x) \,d m_Y(y)
    = \int_{L \times L} d_X(u \cdot a, v \cdot a) \,d m_L(u) \,d m_L(v) 
    \geq c'' \geq c.
    \]
    This completes the induction.
\end{proof}

\section{Local rigidity of joinings of nilsystems}\label{sec:joining}

Our proof of Theorem~\ref{thm-main} relies heavily on the construction of joinings of nilsystems.
In the ergodic setting, the systems defined by such joinings are again nilsystems. A proof of the following result can be found in \cite[Chapter~11, Proposition~15]{hk-book}. 

\begin{proposition}\label{prop:joining_nilsystems_is_nilsystem}
    Let $(X = G/\Gamma, \mu, T)$ and $(X' = G'/\Gamma', \mu', T')$ be $k$-step nilsystems, and let $\lambda$ be an ergodic joining of these systems.
    Then there exists a $k$-step subnilsystem $(Y,T\times T')$ of $(X\times X',T\times T')$ such that $\lambda$ is the Haar measure on $Y$.
    In particular, the system defined by the joining is itself a $k$-step nilsystem.
\end{proposition}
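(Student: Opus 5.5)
The plan is to reduce the statement to two standard facts about nilrotations, both available in \cite[Part~3]{hk-book}: the description of orbit closures, and unique ergodicity of ergodic nilrotations.

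\textbf{Step 1: work on the product nilmanifold.} First note that $X\times X' = (G\times G')/(\Gamma\times\Gamma')$ is again a nilmanifold. Since the lower central series of $G\times G'$ is the product of those of $G$ and $G'$, the group $G\times G'$ is $k$-step. The map $T\times T'$ is the nilrotation by $(\tau,\tau')\in G\times G'$. A joining $\lambda$ of $(X,\mu,T)$ and $(X',\mu',T')$ is a $(T\times T')$-invariant Borel probability measure on this compact nilmanifold, and by hypothesis it is ergodic. So it suffices to prove the following: every ergodic invariant probability measure $\lambda$ of a nilrotation $S$ on a $k$-step nilmanifold $M$ is the Haar measure of a subnilmanifold $Y\subseteq M$ on which $S$ acts as an ergodic nilrotation.

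\textbf{Step 2: use a generic point and its orbit closure.} Since $\lambda$ is ergodic and $C(M)$ is separable, the pointwise ergodic theorem gives a $\lambda$-generic point $x$. That is, for every $f\in C(M)$,
\[
\frac1N\sum_{n<N} f(S^n x)\to\int f\,d\lambda .
\]
Let $Y=\overline{\{S^n x:n\in\Z\}}$. Because $x$ is generic, the support of $\lambda$ is contained in $Y$.

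I would then invoke the structure theorem for orbit closures of nilrotations (Lesigne, Leibman; \cite[Chapter~11]{hk-book}). It says that $Y$ is a subnilmanifold $Y=H\cdot x$ for a closed subgroup $H$ of the ambient group containing the rotating element, and that the restriction of $S$ to $Y$ is an ergodic nilrotation of the nilmanifold $Y\cong H/\Stab_H(x)$ with respect to its Haar measure $m_Y$. The identification of $Y$ as a nilmanifold, as described at the start of Section~\ref{sec:nosmall}, shows that $Y$ is $k$-step, since $H$ is a closed subgroup of a $k$-step group.

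\textbf{Step 3: unique ergodicity forces $\lambda=m_Y$.} An ergodic nilrotation on a nilmanifold is uniquely ergodic (Parry; \cite[Chapter~11]{hk-book}). Hence $m_Y$ is the unique $S$-invariant probability measure on $Y$. Since $\lambda$ is $S$-invariant and supported on $Y$, we get $\lambda=m_Y$. Alternatively, unique ergodicity gives equidistribution of every orbit in $Y$ towards $m_Y$, and comparing with the genericity of $x$ gives the same conclusion. Therefore $(Y,m_Y,T\times T')$ is a $k$-step subnilsystem whose Haar measure is $\lambda$, and the system defined by the joining is a $k$-step nilsystem.

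\textbf{Main obstacle.} The main difficulty is Step 2, namely that orbit closures of nilrotations are subnilmanifolds on which the rotation is ergodic. This is Leibman's theorem and is the genuinely nontrivial input; it needs care when $G$ is not connected, where one passes to a suitable finite-index or connected-component reduction as is done in \cite{hk-book}. I would take it from \cite[Chapter~11]{hk-book} rather than reprove it. Everything else is formal: the product structure, genericity, and unique ergodicity.
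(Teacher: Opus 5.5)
Your proposal is correct and matches the argument behind the source the paper relies on; the paper gives no proof of its own and only cites Host--Kra, Chapter~11, Proposition~15. There the result follows the same route: regard $\lambda$ as an ergodic invariant measure for the nilrotation by $(\tau,\tau')$ on $(G\times G')/(\Gamma\times\Gamma')$, take a $\lambda$-generic point, identify its orbit closure as a sub-nilsystem via the Lesigne--Leibman orbit-closure theorem, and conclude $\lambda=m_Y$ by unique ergodicity of that sub-nilsystem.
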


Concretely, Proposition~\ref{prop:joining_nilsystems_is_nilsystem} says that there exist a closed subgroup $H \leq G\times G'$, containing the element $(\tau,\tau')$ defining the nilrotation $T\times T'$, and a point $x\in X\times X'$ such that $Y := H\cdot x$ is a subnilmanifold of $X \times X'$ and $\lambda$ is its Haar measure. 

Before proving Proposition \ref{prop:neighbourhood_diag_joining_graph}, let us compare it with the classical case of rotations on compact abelian groups. 
Let  $(Z, m_Z, T)$ be an ergodic compact abelian group rotation\footnote{By a compact abelian group rotation, we mean that $Z$ is a compact metrizable abelian group, $m_Z$ is its Haar probability measure, and $T$ is the rotation by a fixed element of $Z$.}, where
\[T \colon Z \to Z, \qquad T z = z + \alpha\]
is the rotation by $\alpha \in Z$. 
A standard result on joinings of rotations shows that every ergodic self-joining of $(Z, m_Z, T)$ is the image under some translation of the Haar measure of the closed subgroup
\[
H := \overline{\{(n\alpha, n\alpha) : n \in \Z\}} \subseteq Z \times Z;
\]
see for example \cite[Chapter 4, Proposition 7]{hk-book}.
Since $T$ is ergodic,
\[
\overline{\{n \alpha : n \in \Z\}} = Z
\]
and therefore 
\[
H = \Delta := \{(z, z) : z \in Z\}.
\]
It follows that any ergodic self-joining of $Z$ is supported on a translate of the diagonal, hence is the graph joining of a translation 
\[z \mapsto z + s\]
for some $s \in Z$.
Thus in the compact abelian rotation setting, one has a global rigidity result: every ergodic self-joining is the graph joining of a translation. 

This global rigidity fails for more general nilsystems. 
A counterexample already arises in the $2$-step nilpotent Heisenberg nilsystem. 
\begin{example}
Let $G = \R^3$ with multiplication law
\[
(x, y, z) \cdot (x', y', z') := (x + x', y + y', z + z' + x y').
\]
Let $\Gamma = \Z^3 \subseteq G$, and write $X = G/\Gamma$.  
Fix $\tau := (\alpha, \beta, \gamma) \in G$ such that $1, \alpha, \beta$ are linearly independent over $\Q$, and let $T$ be the nilrotation
\[
T(g\Gamma) := \tau g \Gamma.
\]
Then $(X, \mu, T)$ is the ergodic Heisenberg nilsystem.  

Let 
\[
C := \{(0, 0, t) : t \in \R\} \subseteq G
\]
be the center of $G$. 
Choose $s \in \R$ such that $1, \alpha, \beta, \alpha s$ are linearly independent over $\Q$, and set 
\[
a := (0, s, 0). 
\]
Define 
\[
H := \{(g, a g a^{-1} c): g \in G, c \in C\} \leq G \times G.
\]
Since $C$ is closed and central, $H$ is a closed subgroup of $G \times G$. 
Now consider 
\[
Y := H \cdot (e_G \Gamma, a \Gamma) = \{(g\Gamma, agc\Gamma) : g \in G, c \in C\} \subseteq X \times X.
\]
If $\T = \R/\Z$, the map 
\[
\phi \colon X \times \T \to Y, \qquad \phi(g\Gamma, t) := (g\Gamma, ag(0,0,t) \Gamma)
\]
is a well-defined homeomorphism.
Hence $Y$ is compact and therefore a subnilmanifold of $X \times X$.
Let $\lambda$ be the Haar probability measure on $Y$.

Note that 
\[
\tau a = (\alpha, \beta + s, \gamma + \alpha s) = a \tau (0, 0, \alpha s),
\]
and therefore
\[(T \times T) \circ \phi = \phi \circ (T \times R_{\alpha s}),\]
where 
\[
R_{\alpha s} (t) := t + \alpha s \mod 1.
\]
Now $X \times \T$ is itself a nilmanifold, and the choice of $s$ implies that $T \times R_{\alpha s}$ defines an ergodic nilrotation.
Hence $(Y, \lambda, T \times T)$ is ergodic. 

Since $\lambda$ is $T \times T$-invariant, the coordinate projections of $\lambda$ are $T$-invariant probability measures on $X$. 
By unique ergodicity of ergodic nilsystems, both marginals must then agree with $\mu$. 
Hence $\lambda$ is an ergodic self-joining of $X$.

Finally, $\lambda$ is not a graph joining. 
Indeed, for each $g\Gamma \in X$, the fiber of the first coordinate projection $\pi_1 \colon Y \to X$ is homeomorphic to $\T$,
\[
\pi_1^{-1}(g\Gamma) = \{(g\Gamma, agc\Gamma) : c \in C\}.
\]
In particular $\pi_1^{-1}(g\Gamma)$ is non-trivial.
Therefore $\lambda$ cannot be a graph joining. 
\end{example}

We are now ready to prove Proposition \ref{prop:neighbourhood_diag_joining_graph}. 

\begin{proof}[Proof of Proposition \ref{prop:neighbourhood_diag_joining_graph}]
    Fix a compatible metric $d_X$ on $X = G/\Gamma$.
    Define the product metric 
    \[
    d_{X \times X}((x,y), (x',y')) := d_X(x,x') + d_X (y,y').
    \]
    Let $c > 0$ be the constant obtained from Lemma \ref{lem:no_small_subnilmanifolds_averaged} applied to the nilmanifold $X \times X$ with metric $d_{X \times X}$.

    Consider the set 
    \[
    U := \left\{ \lambda \in J_e(X, X) : \int_{X \times X} d_X(x,y) \,d\lambda(x,y) < c/2 \right\}.
    \]
    Since $d_X$ is continuous on $X \times X$, $U$ is open in $J_e(X,X)$ by definition of the weak-* topology.
    Moreover, 
    \[
    \int_{X \times X} d_X(x,y) \,d\mu_\Delta(x,y)
    = \int_{X} d_X(x,x) \,d\mu(x) = 0,
    \]
    so $\mu_\Delta \in U$. 
    Since the metric $d$ on $\mathrm{Pr}(X \times X)$ induces the weak-* topology, there exists $\delta > 0$ such that 
    \[
    \{\lambda \in J_e(X,X) : d(\lambda, \mu_\Delta) < \delta\} \subseteq U.
    \]
    It remains to prove that every $\lambda \in U$ is the graph joining of an automorphism of $(X, \mu, T)$. 

    Let $\lambda \in U$.
    By Proposition \ref{prop:joining_nilsystems_is_nilsystem}, $\lambda$ is the Haar measure of a $T \times T$-invariant subnilmanifold 
    \[
    Y = H \cdot (x_0, y_0) \subseteq X \times X,
    \]
    where $H \leq G \times G$ is a closed subgroup and $(x_0, y_0) \in X \times X$. 

    Let 
    \[
    \pi_1 \colon Y \to X
    \]
    be the first coordinate projection. 
    Since $(\pi_1)_* \lambda = \mu$, and since $\mu$ has full topological support, $\pi_1$ is surjective.
    Indeed $\pi_1(Y)$ is compact, hence closed and has full measure, so $\pi_1(Y) = X$.
    
    Next, we show that $\pi_1$ is injective.

    For $x \in X$, define the fiber
    \[
    Y_x := Y \cap (\{x\} \times X).
    \]
    We first check that each $Y_x$ is a subnilmanifold of $X \times X$.
    Since $\pi_1$ is surjective, $Y_x$ is non-empty, so we can choose some $(x,y) \in Y_x$.
    Then $Y = H \cdot (x,y)$.
    If $x = g\Gamma$, then 
    \[
    Y_x = H_x \cdot (x,y),
    \]
    where 
    \[
    H_x := H \cap ((g \Gamma g^{-1}) \times G).
    \]
    The subgroup $H_x$ is closed in $G \times G$, and $Y_x$ is closed in $X \times X$. 
    Hence $Y_x$ is a subnilmanifold of $X \times X$.

    Let $m_{Y_x}$ denote the Haar measure on $Y_x$.
    We claim that 
    \[
    \lambda = \int_X m_{Y_x} \,d \mu(x)
    \] 
    is the disintegration of $\lambda$ over $\pi_1$.

    Observe that if $h = (h_1, h_2) \in H$, then the translation by $h$ maps $Y_x$ homeomorphically onto $Y_{h_1} \cdot x$, and it maps $m_{Y_x}$ to $m_{Y_{h_1 \cdot x}}$.

    The measurability of $x \mapsto m_{Y_x}$ follows from a measurable section argument. 
    Choose $b_0 \in X$ with $(e_X, b_0) \in Y$. 
    Since $\pi_1$ is surjective, the map 
    \[
    H \to X, \qquad h = (h_1, h_2) \mapsto h_1 \cdot e_X
    \]
    is surjective, hence it admits a measurable section $\sigma \colon X \to H$; see for example \cite[Lemma~7]{baggett1980functional}.
    Then $Y_x = \sigma(x) \cdot Y_{e_X}$, and  
    \[
    m_{Y_x} = (\sigma(x))_* m_{Y_{e_X}},
    \]
    so $x \mapsto \int_{Y_x} f \,d m_{Y_x}$ is measurable for every $f \in C(Y)$.

    Define a probability measure $\widetilde{\lambda}$ on $Y$ by setting
    \[
    \int_Y f \,d\widetilde{\lambda} 
    := \int_X \int_{Y_x} f \,d m_{Y_x} \,d\mu(x)
    \]
    for all $f \in C(Y)$.
    Then by combining the above observation with invariance of $\mu$, we get for any $h = (h_1, h_2) \in H$ that  
    \begin{align*}
        \int_Y f(h \cdot z) \,d\widetilde{\lambda}(z)
        &= \int_X \int_{Y_x} f(h \cdot z) \,d m_{Y_x}(z) \,d\mu(x) \\
        &= \int_X \int_{Y_{h_1 \cdot x}} f(z) \,d m_{Y_{h_1 \cdot x}}(z) \,d\mu(x) \\
        &= \int_X \int_{Y_x} f(z) \,d m_{Y_x}(z) \,d\mu(x) 
        = \int_Y f(z) \,d\widetilde{\lambda}(z),
    \end{align*}
    for all $f \in C(Y)$.
    By uniqueness of the Haar measure on $Y$, we thus have $\widetilde{\lambda} = \lambda$, so the disintegration claim holds.
    
    Now suppose that $\pi_1$ is not injective.
    Then $Y_x$ is non-trivial for some $x \in X$.
    Since $H$ acts transitively on $Y$, and the translation by $h = (h_1, h_2) \in H$ maps $Y_x$ bijectively to $Y_{h_1 \cdot x}$, every fiber must then be non-trivial.  

    Thus by the choice of $c$, 
    \[
    \int_{Y_x \times Y_x} d_{X \times X}(z, z') \,d m_{Y_x}(z) \,d m_{Y_x}(z') \geq c
    \]
    for every $x \in X$.

    But for $z = (x,y)$ and $z' = (x,y')$ in $Y_x$, we have that 
    \[
    d_{X \times X}(z, z') = d_X(y,y') \leq d_X(x,y) + d_X(x,y').
    \]
    Therefore, integrating over $x$, and using the disintegration of $\lambda$ we just proved, we get  
    \begin{align*}
        c &\leq 
        \int_X \int_{Y_x \times Y_x} (d_X(x,y) + d_X(x,y')) \,d m_{Y_x}(x,y) \,d m_{Y_x}(x,y') \,d\mu(x) \\
        &\leq 2 \int_X \int_{Y_x} d_X(x,y) \,d m_{Y_x}(x,y) \,d\mu(x) \\
        &= 2 \int_Y d_X(x,y) \,d\lambda(x,y).
    \end{align*}
    But $\lambda \in U$, so the last expression is strictly bounded by $c$. 
    This is a contradiction, so $\pi_1$ must be injective. 

    We have established that $\pi_1 \colon Y \to X$ is a continuous bijection.
    Since $Y$ is compact, $\pi_1$ is a homeomorphism.

    By symmetry, the same argument applied to the second coordinate shows  that the second coordinate projection $\pi_2 \colon Y \to X$ is also a homeomorphism. 

    Define 
    \[
    S := \pi_2 \circ \pi_1^{-1} \colon X \to X.
    \]
    Then $S$ is a homeomorphism.
    Moreover, since $Y$ is $T \times T$-invariant and $\pi_1$ intertwines $T \times T$ with $T$, we have 
    \[
    S(Tx) = \pi_2(\pi_1^{-1}(Tx)) = \pi_2((T \times T) \pi_1^{-1}(x)) = T(S(x)),
    \]
    for every $x \in X$.
    Thus $S$ commutes with the dynamics. 

    Since $(\pi_1)_* \lambda = \mu$, and $\pi_1$ is a homeomorphism, we have 
    \[
    \lambda = (\pi_1^{-1})_* \mu.
    \]
    Therefore, 
    \[
    S_* \mu = (\pi_2)_* (\pi_1^{-1})_* \mu = (\pi_2)_* \lambda = \mu,
    \]
    so $S$ is an automorphism of $(X, \mu, T)$.

    Finally, we have that 
    \[
    \lambda = (\pi_1^{-1})_* \mu = (\Id_X, \pi_2 \circ \pi_1^{-1})_* \mu = (\Id_X, S)_* \mu, 
    \]
    so $\lambda$ is precisely the graph joining associated to $S$.
\end{proof}

As an immediate consequence, factor maps into an ergodic nilsystem are locally rigid modulo automorphisms. We shall not use this corollary in the remainder of the proof of Theorem~\ref{thm-main}.

\begin{corollary}\label{cor:local_rigidity_factor_maps}
Let $(X,\mu,T)$ be an ergodic measure-preserving dynamical system, let
$(Y,\nu,R)$ be an ergodic nilsystem, and let $\pi,\psi \colon X \to Y$ 
be factor maps. Then there exists $\delta>0$
such that, if $d\bigl((\pi,\psi)_*\mu,\nu_\Delta\bigr)<\delta$, there exists an automorphism $S$ of $(Y,\nu,R)$ such that $\psi = S \circ \pi$ $\mu$-almost surely. 
\end{corollary}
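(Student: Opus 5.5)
The plan is to apply Proposition~\ref{prop:neighbourhood_diag_joining_graph} to the joining $\lambda := (\pi,\psi)_*\mu$ on $Y\times Y$. Take $\delta>0$ to be the constant that proposition provides for the ergodic nilsystem $(Y,\nu,R)$. It depends only on $Y$ and the chosen metric $d$, which is stronger than what the corollary asks for.

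First we check that $\lambda$ is an ergodic self-joining of $Y$.
\begin{itemize}
\item \emph{Marginals.} The marginals of $\lambda$ are $\pi_*\mu = \nu$ and $\psi_*\mu=\nu$, since $\pi$ and $\psi$ are factor maps.
\item \emph{Invariance.} Because $\pi\circ T = R\circ\pi$ and $\psi\circ T = R\circ \psi$ almost surely, we have $(\pi,\psi)\circ T = (R\times R)\circ(\pi,\psi)$ almost surely. Hence $\lambda$ is $R\times R$-invariant.
\item \emph{Ergodicity.} The system $(Y\times Y,\lambda,R\times R)$ is a factor of the ergodic system $(X,\mu,T)$ via $(\pi,\psi)$. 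A factor of an ergodic system is ergodic, since any invariant set pulls back to an invariant set of the same measure.
\end{itemize}
So $\lambda\in J_e(Y,Y)$. If $d(\lambda,\nu_\Delta)<\delta$, Proposition~\ref{prop:neighbourhood_diag_joining_graph} gives an automorphism $S$ of $(Y,\nu,R)$ with $\lambda = (\Id_Y,S)_*\nu$.

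It remains to show $\psi = S\circ\pi$ $\mu$-almost surely. Since $\lambda=(\Id_Y,S)_*\nu$ is concentrated on the graph $\{(y,S y)\}$, we have $\lambda(\{(y,y') : y' = S y\}) = 1$. If $S$ is only defined almost everywhere, we fix a Borel representative, and the graph is then a Borel set. Pulling back along $(\pi,\psi)$ gives
\[
\mu(\{x : \psi(x) = S(\pi(x))\}) = \lambda(\{(y,y'): y'=Sy\}) = 1,
\]
which is the claim.

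The only point needing care is the measurability and null-set bookkeeping when $S$ is defined only up to $\nu$-null sets. Changing $S$ on a $\nu$-null set $N$ changes the set $\{x : \psi(x)=S(\pi(x))\}$ only inside $\pi^{-1}(N)$. This set is $\mu$-null because $\pi_*\mu=\nu$, so the conclusion does not depend on the representative. There is no real obstacle: the corollary is essentially a restatement of the proposition for the pushforward joining.
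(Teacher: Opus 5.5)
Your proposal is correct and takes the same route as the paper's proof: push forward to $\lambda=(\pi,\psi)_*\mu\in J_e(Y,Y)$, apply Proposition~\ref{prop:neighbourhood_diag_joining_graph}, and pull the full-measure graph $\Gamma_S$ back along $(\pi,\psi)$. Your check that $\lambda$ is an ergodic self-joining and your null-set bookkeeping fill in details the paper leaves implicit. In the paper's proof of the proposition, $S$ is in fact a homeomorphism, so the graph is even closed.
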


\begin{proof}
Set $\lambda := (\pi,\psi)_*\mu \in J_e(Y, Y)$. By Proposition~\ref{prop:neighbourhood_diag_joining_graph}, there is $\delta>0$ such that, if $d(\lambda,\nu_\Delta)<\delta$, then $\lambda$ is the graph joining of an automorphism $S$ of $(Y,\nu,R)$, that is, $\lambda$ is supported on the graph $\Gamma_S := \{(y,Sy): y \in Y\}$. Therefore,
$1 = \lambda(\Gamma_S)
  = \mu\bigl((\pi,\psi)^{-1}(\Gamma_S)\bigr)
  = \mu\bigl(\{x \in X : \psi(x)=S(\pi(x))\}\bigr)$. 
\end{proof}

\section{A special case}\label{sec:special}
We now begin the proof of the main theorem.
The idea is to first establish the theorem in the special case where the factor map is a compact abelian group extension, and then reduce the general case to this situation.
This section is devoted to that special case.

We will use the following theorem, which is proved as \cite[Chapter~13, Theorem~11]{hk-book}, and which is originally due to Parry \cite{parry}.  

\begin{theorem}\label{thrm:factor_of_nilsystem_is_nilsystem}
    Let $k \geq 1$.
    Then every factor of an ergodic $k$-step nilsystem is again an ergodic $k$-step nilsystem.
\end{theorem}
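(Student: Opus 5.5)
We prove the statement by induction on $k$, following the structure of the proof of Lemma~\ref{lem:no_small_subnilmanifolds_averaged}. Ergodicity of a factor of an ergodic system is automatic, so only the nilsystem structure needs proof. For $k=1$, $X$ is an ergodic rotation on a compact abelian Lie group $Z$. A factor of such a rotation is given by an invariant sub-$\sigma$-algebra of $L^2(Z)$ spanned by characters (eigenfunctions). These characters form a subgroup $\Lambda$ of $\widehat Z$, and the factor is the rotation on $Z/\Lambda^\perp$, which is again a compact abelian Lie group. So the case $k=1$ holds.

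For the induction step, let $X=G/\Gamma$ be $k$-step, and let $\pi\colon X\to Y$ be a factor map. Let $K:=G_k/(G_k\cap\Gamma)$ be the vertical torus acting on $X$, and let $p\colon X\to Z:=G/(G_k\Gamma)$ be the corresponding $(k-1)$-step factor. Let $Y'$ be the largest common factor of $Y$ and $Z$; by the induction hypothesis $Y'$ is a nilsystem. The strategy is to describe $Y$ through the relatively independent self-joining $\lambda:=\mu\times_Y\mu$. A function $f$ is $Y$-measurable if and only if $f(x)=f(x')$ for $\lambda$-a.e.\ $(x,x')$. By Proposition~\ref{prop:joining_nilsystems_is_nilsystem}, each ergodic component of $\lambda$ is the Haar measure of a subnilmanifold of $X\times X$. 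Using the homogeneity argument from the proof of Proposition~\ref{prop:neighbourhood_diag_joining_graph}, the fibres $\{x'\colon (x,x')\in\operatorname{supp}\}$ are translates of one another by elements of $G$. We want to show that these fibres are orbits of a single closed subgroup acting on $X$, namely
\[
M:=\{g\in G: g\cdot x \text{ and } x \text{ have the same image in } Y \text{ for $\mu$-a.e.\ } x\}
\]
together with the closed subgroup of $K$ acting trivially on $Y$. Then $Y\cong M\backslash X$ measurably.

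The remaining task is to show that this quotient is a nilmanifold carrying a nilrotation. We would split it into the vertical and horizontal parts. First, the subgroup $L\le K$ of vertical rotations preserving every $Y$-measurable function is closed. Since $K$ is central, quotienting $X$ by $L$ gives the nilmanifold $G/(\widetilde L\Gamma)$, where $\widetilde L$ is the preimage of $L$ in $G_k$. Second, the extension $Y\to Y'$ should be an isometric extension whose fibre group is a quotient of $K/L$. Here we use that $Y$ sits between $X/L$ and $Y'$ and that $X\to Z$ is a $K$-extension. Then the induction hypothesis applied to $Z\to Y'$ supplies a presentation $Y'=G'/\Gamma'$, and we lift it.

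The main obstacle is this lifting step. One must show that $Y$ is an abelian group extension of $Y'$ by $K/L'$ for some closed $L'\supseteq L$, and that the cocycle is of nil type, so that $Y$ is again a quotient of a nilpotent Lie group of step at most $k$. First we would try to realise $Y$ directly as $X/M$ by proving that the closed subgroup $M$ above is normalised by $\tau$. To prove this we would exploit unique ergodicity, so that the equivalence relation defined by $\lambda$ is closed and $G$-homogeneous. If normalisation fails, the fallback is the approach of \cite{parry,leibman}: enlarge $G$ to the group of affine transformations of $X$ generated by $G$ and $\tau$, which is still $k$-step nilpotent, and take the quotient there.
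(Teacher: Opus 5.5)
\textbf{There is a genuine gap.} The central claim of your induction step is false. The fibres of a factor map out of a nilsystem need not be orbits of a single closed subgroup $M\le G$ acting by left translations, so in general $Y\not\cong M\backslash X$. Homogeneity of an ergodic component of $\mu\times_Y\mu$ only gives $h_2\cdot F_x=F_{h_1\cdot x}$ for $(h_1,h_2)$ in the corresponding subgroup of $G\times G$, where $F_x$ is the fibre over $x$. So the fibres are translates of one another, not orbits of one fixed group.

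Here is a concrete counterexample. Take the Heisenberg group $G=\R^3$ with $(x,y,z)(x',y',z')=(x+x',y+y',z+z'+xy')$, the lattices $\Gamma=2\Z\times\Z\times\Z\subseteq\Gamma'=\Z^3$ (index $2$), and $\tau=(\alpha,\beta,0)$ with $1,\alpha/2,\beta$ rationally independent. Then $g\Gamma\mapsto g\Gamma'$ is a $2$-to-$1$ factor map from the ergodic nilsystem $X=G/\Gamma$ onto $Y=G/\Gamma'$, with fibres $\{g\Gamma,g\gamma_0\Gamma\}$, where $\gamma_0=(1,0,0)$. For $h=(a,b,c)$ and $g=(x,y,z)$ one computes $g^{-1}hg=(a,b,c+ay-xb)$. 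Hence $h\cdot g\Gamma$ and $g\Gamma$ have the same image for a.e.\ $g$ only if $a=b=0$ and $c\in\Z$, i.e.\ only if $h$ acts trivially on $X$. Thus your $M$, and likewise the subgroup of $K$ acting trivially on $Y$, is trivial on $X$, yet $Y\neq X$. The deck transformation $g\Gamma\mapsto g\gamma_0\Gamma$ is a right translation by an element of the normaliser of $\Gamma$. The fibre through $g\Gamma$ is the orbit of $g\Gamma'g^{-1}$, which moves with the point. Incidentally, $M$ is automatically normalised by $\tau$ because $\pi\circ T=S\circ\pi$, so that is not where the difficulty lies.

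Beyond this, the remaining steps are announced rather than proved. You assert that $Y\to Y\wedge Z$ is an isometric extension with fibre group a quotient of $K/L$ (``should be''). You then need to lift a nil-presentation of $Y'$ to $Y$ with a cocycle of nil type, which you yourself call the main obstacle; this is the whole content of the theorem. Your fallback is not formulated either: the group generated by $G$ and $\tau$ is just $G$.

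The paper does not reprove this statement. It cites \cite[Chapter~13, Theorem~11]{hk-book} and Parry, and remarks that its own Section~\ref{Sec:proof} generalises that argument. That route avoids meets altogether. It uses the joins $W_j=Y\vee Z_j$ along the tower $X=Z_k\to\cdots\to Z_0$ of $X$, starting from $W_k=X$. By \cite[Chapter~5, Lemma~18]{hk-book}, each $W_{j+1}\to W_j$ is an extension by a closed subgroup of the corresponding structure group. Everything thus reduces to showing that the quotient of an ergodic nilsystem by a compact abelian group of automorphisms commuting with $T$ is again a nilsystem.

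That reduced step needs genuine information about automorphisms of nilsystems, for instance that factor maps between ergodic nilsystems are affine \cite[Chapter~13, Theorem~5]{hk-book}. This description accommodates deck transformations such as $g\Gamma\mapsto g\gamma_0\Gamma=\gamma_0(\gamma_0^{-1}g\gamma_0)\Gamma$ above. Your proposal contains no substitute for this input. Note also that the paper's own treatment of the group-extension step, Theorem~\ref{thrm:special_case}, itself invokes the present statement, so it cannot be borrowed here.
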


We now turn to the special case of the main theorem.

\begin{theorem}[Compact abelian group extension case]\label{thrm:special_case}
    Let $(X,\mu,T)$ be an ergodic $k$-step pro-nilsystem, and let
    \[
    \pi \colon (X,\mu,T)\to (Y,\nu,S)
    \]
    be a factor map such that $(X,\mu,T)$ is a skew-product extension of $(Y,\nu,S)$ by a compact abelian group $K$.
    Then $(Y,\nu,S)$ is a $k$-step pro-nilsystem.
\end{theorem}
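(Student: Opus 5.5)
We follow the outline of Section~\ref{sec:proof}. Write $X=\varprojlim X_i$ with factor maps $p_i\colon X\to X_i$ onto ergodic $k$-step nilsystems $(X_i,\mu_i,T_i)$. Let $V_u$, $u\in K$, be the vertical rotations of the skew product $X=Y\times_\sigma K$. Each $V_u$ commutes with $T$ and preserves $\mu$, and $K\ni u\mapsto V_u$ is continuous into the automorphism group with the weak topology. The fixed $\sigma$-algebra of the $K$-action is exactly $\pi^{-1}(\mathcal B_Y)$. For each $i$ and $u\in K$ consider the self-joining $\lambda_u^{(i)}:=(p_i,p_i\circ V_u)_*\mu$ of $X_i$. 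It is ergodic because it is the image of the ergodic system $X$ under the factor map $(p_i,p_i\circ V_u)$. The map $u\mapsto\lambda_u^{(i)}$ is weak-$*$ continuous: for $f,g\in C(X_i)$, $\int f\otimes g\,d\lambda_u^{(i)}=\langle f\circ p_i,\,g\circ p_i\circ V_u\rangle$, which is continuous in $u$ by strong continuity of the Koopman representation. Since $\lambda_0^{(i)}$ is the diagonal joining, Proposition~\ref{prop:neighbourhood_diag_joining_graph} gives an open neighbourhood $N_i$ of $0$ in $K$ such that, for $u\in N_i$, $\lambda_u^{(i)}$ is the graph joining of an automorphism $S_u$ of $X_i$. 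Consequently $p_i\circ V_u=S_u\circ p_i$ a.e., so $V_u$ preserves the $\sigma$-algebra $p_i^{-1}(\mathcal B_{X_i})$.

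Let $H_i:=\{u\in K: V_u^{-1}p_i^{-1}\mathcal B_{X_i}=p_i^{-1}\mathcal B_{X_i}\}$. This is a subgroup containing $N_i$, hence an open subgroup. By compactness it has finite index, so we may choose representatives $u_1,\dots,u_m$ of $K/H_i$. Let $\mathcal A_i:=\bigvee_j V_{u_j}^{-1}p_i^{-1}\mathcal B_{X_i}$. This is $T$-invariant and $K$-invariant, since $V_u$ permutes the cosets and preserves each translate by commutativity. The associated factor $\widetilde X_i$ is the image of $X$ under $(p_i\circ V_{u_1},\dots,p_i\circ V_{u_m})$, that is, an ergodic finite self-joining of $X_i$. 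Iterating Proposition~\ref{prop:joining_nilsystems_is_nilsystem} shows that $\widetilde X_i$ is an ergodic $k$-step nilsystem. Let $\widetilde W_i$ be the factor corresponding to $\mathcal A_i\cap\pi^{-1}\mathcal B_Y$. It is a factor of $\widetilde X_i$, hence an ergodic $k$-step nilsystem by Theorem~\ref{thrm:factor_of_nilsystem_is_nilsystem}. The $\mathcal A_i$ increase, because we may pass to the joins $\mathcal A_1\vee\dots\vee\mathcal A_i$, which are still $K$-invariant finite joinings of nilsystems; hence the $\widetilde W_i$ form an inverse system of factors of $Y$.

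The main step is to show that $\bigvee_i\widetilde W_i=Y$. Let $f\in L^2(Y)$. Since $\bigvee_i p_i^{-1}\mathcal B_{X_i}$ is everything and $p_i^{-1}\mathcal B_{X_i}\subseteq\mathcal A_i$, there are $\mathcal A_i$-measurable $g_i\to f\circ\pi$ in $L^2(\mu)$. Average over $K$ by setting $\bar g_i:=\int_K g_i\circ V_u\,dm_K(u)$. Then $\bar g_i$ is $K$-invariant, hence $\pi^{-1}\mathcal B_Y$-measurable. It is also $\mathcal A_i$-measurable: each $g_i\circ V_u$ is $\mathcal A_i$-measurable by $K$-invariance of $\mathcal A_i$, and the conditional expectation onto $\mathcal A_i$ commutes with the Bochner integral. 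Finally $\|\bar g_i-f\circ\pi\|_2\le\|g_i-f\circ\pi\|_2\to 0$, because $f\circ\pi$ is $V_u$-invariant and each $V_u$ is unitary. So $f$ is a limit of $\widetilde W_i$-measurable functions, and $Y=\varprojlim\widetilde W_i$ is a $k$-step pro-nilsystem.

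The expected obstacles are technical. First, we must verify that $H_i$ is really a group, using the invariance of $p_i^{-1}\mathcal B_{X_i}$ rather than the automorphisms $S_u$ directly. Second, we must check that the joins stay $K$-invariant and are realized as nilsystems. Third, we must justify the continuity of $u\mapsto\lambda_u^{(i)}$ needed to invoke Proposition~\ref{prop:neighbourhood_diag_joining_graph}.
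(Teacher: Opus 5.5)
Your proposal is correct and is essentially the paper's proof: Koopman continuity plus the local rigidity proposition gives an open, finite-index subgroup $H_i\leq K$ under which $p_i^{-1}(\mathcal X_i)$ is invariant; the join over coset representatives is a $K$-invariant ergodic finite self-joining of $X_i$ (hence a $k$-step nilsystem); its intersection with $\pi^{-1}(\mathcal Y)$ is a nilsystem by Parry's theorem; and averaging over $K$ shows these factors generate $Y$. The differences are only cosmetic---you define $H_i$ directly by invariance of the $\sigma$-algebra, so the subgroup property is immediate rather than obtained by composing automorphisms; you get monotonicity by taking joins instead of nesting the representative sets $R_i$; and you justify the $\mathcal A_i$-measurability of the $K$-average via the Bochner integral, a point the paper passes over quickly.
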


\begin{proof}
    Write
    \[
    (X,\mu,T)=\varprojlim (X_i,\mu_i,T_i),
    \]
    where each $(X_i,\mu_i,T_i)$ is an ergodic $k$-step nilsystem, and let
    \[
    p_i\colon X\to X_i
    \]
    denote the factor maps.

    By assumption, $X$ is of the form 
    \[
    (X,\mu,T)=(Y\times K,\nu\times m_K,T_\rho),
    \]
    where $m_K$ is Haar measure on $K$, $\rho\colon Y\to K$ is a measurable map, and
    \[
    T_\rho(y,g)=(Sy,\rho(y)+g).
    \]
    For $u\in K$, write
    \[
    V_u\colon X\to X,\qquad V_u(y,g)=(y,u+g),
    \]
    for the corresponding vertical rotation.

    For each $i$ and each $u\in K$, define a measure on $X_i\times X_i$ by
    \[
    \lambda_u^{(i)}:=(p_i,p_i\circ V_u)_*\mu.
    \]
    Since $p_i$ is a factor map, $\lambda_u^{(i)}$ is a self-joining of $(X_i,\mu_i,T_i)$.
    Moreover, the system
    \[
    (X_i\times X_i,\lambda_u^{(i)},T_i\times T_i)
    \]
    is a factor of the ergodic system $(X,\mu,T)$ via the map $(p_i,p_i\circ V_u)$, and is therefore ergodic.
    Thus
    \[
    \lambda_u^{(i)}\in J_e(X_i,X_i).
    \]

    For each $i$, define
    \[
    H_i:=\{u\in K : \lambda_u^{(i)} \text{ is the graph joining of an automorphism of }X_i\}.
    \]
    We claim that $H_i$ is an open subgroup of $K$.

    Let $u,v\in H_i$.
    Then there exist automorphisms $\phi_u,\phi_v\colon X_i\to X_i$ such that
    \[
    p_i\circ V_u=\phi_u\circ p_i \qquad \mu\text{-a.e.}
    \]
    and
    \[
    p_i\circ V_v=\phi_v\circ p_i \qquad \mu\text{-a.e.}
    \]
    Hence
    \[
    p_i\circ V_{u+v}
    =\phi_u\circ \phi_v\circ p_i
    \qquad \mu\text{-a.e.}
    \]
    so $\lambda_{u+v}^{(i)}$ is the graph joining associated to the automorphism $\phi_u\circ\phi_v$.
    Thus $u+v\in H_i$.

    Similarly,
    \[
    \phi_u^{-1}\circ p_i
    =\phi_u^{-1}\circ p_i\circ V_u\circ V_{-u}
    =\phi_u^{-1}\circ \phi_u\circ p_i\circ V_{-u}
    =p_i\circ V_{-u}
    \qquad \mu\text{-a.e.},
    \]
    and therefore $\lambda_{-u}^{(i)}$ is the graph joining associated to $\phi_u^{-1}$.
    Hence $-u\in H_i$, and so $H_i$ is a subgroup of $K$.

    By Proposition~\ref{prop:neighbourhood_diag_joining_graph}, there exists an open neighbourhood
    \[
    U_i\subseteq J_e(X_i,X_i)
    \]
    of the diagonal joining such that every element of $U_i$ is the graph joining of an automorphism of $X_i$.
    
    We claim that the map
\[
K \to J_e(X_i,X_i), \qquad u \mapsto \lambda_u^{(i)}
\]
is continuous.
Let $u_n \to u$ in $K$, and let $f,g \in C(X_i)$.
Then
\[
\int_{X_i\times X_i} f(x)g(x')\, d\lambda^{(i)}_{u_n}
=
\int_X f(p_i(x))\, g(p_i(V_{u_n}x))\, d\mu(x).
\]
Write $F = f\circ p_i$ and $G = g\circ p_i$, viewed as elements of $L^2(X,\mu)$.
Since the vertical rotations $V_u$ define a measure-preserving action of the compact group $K$ on $X$, the associated Koopman representation
\[
U_u h := h\circ V_u
\]
on $L^2(X,\mu)$ is strongly continuous.
Hence
\[
\|U_{u_n}G - U_uG\|_{L^2(\mu)} \to 0,
\]
and therefore
\[
\int_X F \cdot U_{u_n}G \, d\mu \to \int_X F \cdot U_uG \, d\mu.
\]
That is,
\[
\int_{X_i\times X_i} f(x)g(x')\, d\lambda^{(i)}_{u_n}
\to
\int_{X_i\times X_i} f(x)g(x')\, d\lambda^{(i)}_u.
\]
Since finite linear combinations of functions of the form $(x,x')\mapsto f(x)g(x')$ are dense in $C(X_i\times X_i)$, it follows that
\[
\lambda^{(i)}_{u_n} \to \lambda^{(i)}_u
\]
in the weak-* topology.

    Since
    \[
    \lambda_0^{(i)}=(p_i,p_i)_*\mu
    \]
    is the diagonal joining, the preimage of $U_i$ under this map is an open neighbourhood of $0$ contained in $H_i$.
    Therefore $H_i$ is an open subgroup of $K$.

    Since $K$ is compact and $H_i$ is open, the quotient $K/H_i$ is finite.
    Choose a finite set $R_i\subseteq K$ of coset representatives such that
    \[
    R_i+H_i=K.
    \]
    By enlarging the sets $R_i$ if necessary, we may assume that $0\in R_i$ for every $i$, and that
    \[
    R_i\subseteq R_j \qquad \text{whenever } i\leq j.
    \]

    For each $u\in H_i$, let
    \[
    V_u^{(i)}\colon (X_i,\mu_i,T_i)\to (X_i,\mu_i,T_i)
    \]
    be an automorphism whose graph joining is $\lambda_u^{(i)}$.
    Then
    \[
    V_u^{(i)}\circ p_i=p_i\circ V_u \qquad \mu\text{-a.e.}
    \]
    and hence
    \[
    V_u^{-1}(p_i^{-1}(\mathcal X_i)) =_\mu p_i^{-1}(\mathcal X_i),
    \]
    where $\mathcal X_i$ denotes the sigma-algebra of the factor $X_i$.
    Thus the sigma-algebra corresponding to $X_i$ is invariant under vertical rotations by elements of $H_i$.

    We now enlarge $X_i$ to a factor invariant under all vertical rotations.
    For each $i$, set
    \[
    \widetilde X_i:=X_i^{R_i}=\prod_{r\in R_i} X_i,
    \]
    and define
    \[
    q_i\colon X\to \widetilde X_i,\qquad
    q_i(x):=(p_i(V_r x))_{r\in R_i}.
    \]
    Let
    \[
    \widetilde\mu_i:=(q_i)_*\mu,
    \qquad
    \widetilde T_i((x_r)_{r\in R_i}):=(T_i x_r)_{r\in R_i}.
    \]
    Then
    \[
    q_i\colon (X,\mu,T)\to (\widetilde X_i,\widetilde\mu_i,\widetilde T_i)
    \]
    is a factor map, so $(\widetilde X_i,\widetilde\mu_i,\widetilde T_i)$ is ergodic.
    Moreover, it is an ergodic finite self-joining of the nilsystem $X_i$, and hence is itself a $k$-step nilsystem by repeated application of Proposition~\ref{prop:joining_nilsystems_is_nilsystem}.

    The sigma-algebra corresponding to $\widetilde X_i$ is
    \[
    q_i^{-1}(\widetilde{\mathcal X}_i)
    = \bigvee_{r\in R_i} V_r^{-1}(p_i^{-1}(\mathcal X_i)).
    \]
    We claim that this sigma-algebra is invariant under all vertical rotations.
    Let $u\in K$.
    For each $r\in R_i$, since $R_i+H_i=K$, there exist $s(r)\in R_i$ and $h(r)\in H_i$ such that
    \[
    r+u=s(r)+h(r).
    \]
    Therefore
    \[
    V_u^{-1}(V_r^{-1}(p_i^{-1}(\mathcal X_i)))
    =V_{r+u}^{-1}(p_i^{-1}(\mathcal X_i))
    =V_{s(r)}^{-1}\bigl(V_{h(r)}^{-1}(p_i^{-1}(\mathcal X_i))\bigr)
    =_\mu V_{s(r)}^{-1}(p_i^{-1}(\mathcal X_i)),
    \]
    using the $H_i$-invariance of $p_i^{-1}(\mathcal X_i)$.
    Hence
    \[
    V_u^{-1}(q_i^{-1}(\widetilde{\mathcal X}_i))
    =_\mu \bigvee_{r\in R_i} V_{s(r)}^{-1}(p_i^{-1}(\mathcal X_i))
    \subseteq_\mu q_i^{-1}(\widetilde{\mathcal X}_i).
    \]
    Since $V_u$ is invertible, the reverse inclusion also holds, and thus
    \[
    V_u^{-1}(q_i^{-1}(\widetilde{\mathcal X}_i)) =_\mu q_i^{-1}(\widetilde{\mathcal X}_i)
    \]
    for every $u\in K$.

    Since $0\in R_i$, each factor $\widetilde X_i$ lies above $X_i$.
    Moreover, because the families $R_i$ and $X_i$ are increasing, the sigma-algebras $q_i^{-1}(\widetilde{\mathcal X}_i)$ are increasing.

    For each $i$, define
    \[
    \mathcal W_i
    := \overline{\pi^{-1}(\mathcal Y)}^{\,\mu}\cap
       \overline{q_i^{-1}(\widetilde{\mathcal X}_i)}^{\,\mu},
    \]
    where the bar denotes $\mu$-completion.
    Let $W_i$ be the corresponding factor of $X$.
    By construction, $W_i$ is a factor of $\widetilde X_i$, and hence, by Theorem~\ref{thrm:factor_of_nilsystem_is_nilsystem}, each $W_i$ is an ergodic $k$-step nilsystem.

    We now show that the sigma-algebra $\pi^{-1}(\mathcal Y)$ is generated by the increasing family $(\mathcal W_i)$.
    Let $f\in L^2(\pi^{-1}(\mathcal Y))$ and $\eps>0$.     Since $X=\varprojlim X_i$ and $\widetilde X_i$ lies above $X_i$, there exist $i_0$ and $ g\in L^2(q_{i_0}^{-1}(\widetilde{\mathcal X}_{i_0}))$ 
    such that $ \|g-f\|_{L^2(\mu)}<\eps$. 
    Set
    \[
    h:=\E_\mu(g\mid \pi^{-1}(\mathcal Y)).
    \]
    Since $X$ is an extension of $Y$ by $K$, we have
    \[
    h(x)=\int_K g(V_u x)\,dm_K(u)
    \qquad \mu\text{-a.e.}
    \]
    Because $q_{i_0}^{-1}(\widetilde{\mathcal X}_{i_0})$ is invariant under all vertical rotations, each function $g\circ V_u$ is measurable with respect to this sigma-algebra.
    Hence $h$ is also measurable with respect to $q_{i_0}^{-1}(\widetilde{\mathcal X}_{i_0})$.
    By construction, $h$ is $\pi^{-1}(\mathcal Y)$-measurable as well.
    Therefore $h\in L^2(\mathcal W_{i_0})$. 

    Moreover,
    \[
    \|h-f\|_{L^2(\mu)}
    =\|\E_\mu(g-f\mid \pi^{-1}(\mathcal Y))\|_{L^2(\mu)}
    \leq \|g-f\|_{L^2(\mu)}
    <\eps.
    \]
    Since $f$ and $\eps$ were arbitrary, it follows that
    \[
    \pi^{-1}(\mathcal Y)\subseteq_\mu \bigvee_i \mathcal W_i.
    \]
    The reverse inclusion is immediate from the definition of $\mathcal W_i$.
    Hence
    \[
    \pi^{-1}(\mathcal Y)=_\mu \bigvee_i \mathcal W_i.
    \]

    Thus the factor $(Y,\nu,S)$ is the inverse limit, in the measure-theoretic sense, of the increasing family of $k$-step nilsystems $W_i$.
    Therefore $(Y,\nu,S)$ is a $k$-step pro-nilsystem.
\end{proof}

\section{Proof of Theorem \ref{thm-main}}\label{Sec:proof} 

We are now ready to prove the main theorem.
The idea is to construct, for a pro-nilsystem, a tower of factors by taking the corresponding tower at each finite stage of the inverse limit and then passing this construction to the inverse limit.
The downward induction argument used in the proof may be viewed as a generalization to the pro-nilsystem setting of the argument used by Host and Kra in \cite[Chapter~13, Theorem~11]{hk-book} to show that factors of ergodic nilsystems are again nilsystems. 
We require one further result, after which we will prove the main theorem.
For the explicit construction of the tower of factors of a nilsystem, see \cite[Chapter~11, Section~1]{hk-book}.

\begin{proposition}\label{prop:inverse_limit_extension}
    Let $k \geq 2$, and suppose that
    \[
    (X,\mu,T)=\varprojlim (X_i,\mu_i,T_i),
    \]
    where each $(X_i,\mu_i,T_i)$ is an ergodic $k$-step nilsystem.
    For each $i$, let $(Z_i,\nu_i,S_i)$ denote the $(k-1)$-step factor in the tower of $X_i$, and let $K_i$ be the top structure group, so that $X_i$ is an extension of $Z_i$ by $K_i$.

    Then the systems $(Z_i,\nu_i,S_i)$ and the groups $K_i$ admit inverse system structures such that the inverse limit $(X,\mu,T)$ is an extension of
    \[
    (Z,\nu,S):=\varprojlim (Z_i,\nu_i,S_i)
    \]
    by the compact abelian group
    \[
    K:=\varprojlim K_i.
    \]
\end{proposition}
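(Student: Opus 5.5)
We first make the towers at the finite stages compatible with the factor maps of the inverse system, then pass to the limit. Write $X_i=G_i/\Gamma_i$ with $G_i$ chosen (as in \cite[Chapter~11, Section~1]{hk-book}) so that $T_i$ is translation by $\tau_i\in G_i$ and $G_i$ is spanned by its identity component and $\tau_i$. The $(k-1)$-step factor is $Z_i=G_i/(G_{i,k}\Gamma_i)$, and $K_i=G_{i,k}/(G_{i,k}\cap\Gamma_i)$ acts freely on $X_i$ by central translations, with orbits the fibers of $X_i\to Z_i$.

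\textbf{Step 1: compatibility of the towers.} Let $\pi_{ji}\colon X_j\to X_i$ ($j\ge i$) be a factor map. First we show that the $(k-1)$-step factor is canonical: $Z_i$ is the maximal factor of $X_i$ of order $k-1$. By the measure-theoretic characterization in \cite{hk-book}, the $\sigma$-algebra of $Z_i$ is $\mathcal Z_{k-1}(X_i)$, and this $\sigma$-algebra is functorial under factor maps. Hence $\pi_{ji}$ descends to a factor map $Z_j\to Z_i$, and these maps form an inverse system. For the groups we use the cocycle description. $X_i$ is an extension of $Z_i$ by $K_i$ with cocycle $\rho_i\colon Z_i\to K_i$, and the vertical rotations are the $\mathcal Z_{k-1}$-relative automorphisms. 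We will show that $\pi_{ji}$ intertwines the $K_j$-action with the $K_i$-action through a continuous surjective homomorphism $\kappa_{ji}\colon K_j\to K_i$. The natural way is to invoke the standard fact that a factor map between ergodic group extensions over a compatible base factor, which is measurable with respect to the extension, is given a.e.\ by $(z,u)\mapsto(\pi(z),\kappa(u)+\phi(z))$ for a homomorphism $\kappa$. One could instead use that measurable factor maps between nilsystems are a.e.\ equal to affine maps. That stronger route is not available here, because any map into a nilsystem has this form only after the work done by Parry.

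\textbf{Step 2: passing to the limit.} The $\sigma$-algebras of the $Z_i$, pulled back to $X$, increase. Their join defines $Z=\varprojlim Z_i$. The groups $K=\varprojlim K_i$ form a compact abelian group, since each $\kappa_{ji}$ is surjective because $\pi_{ji}$ is onto. The compatible vertical actions then assemble into a measure-preserving action of $K$ on $X$. We check freeness and transitivity on the fibers of $X\to Z$ by writing $X_i\cong Z_i\times K_i$ and modifying coordinates recursively, in the style of a Mittag-Leffler argument. The $\phi$-corrections from Step 1 can be absorbed inductively, so that the coordinates become strictly compatible. This yields a measurable isomorphism $X\cong Z\times K$ with a skew-product cocycle $\rho=\varprojlim\rho_i$.

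\textbf{Main obstacle.} The hard part is the rigidity statement in Step 1: showing that $\pi_{ji}$ preserves the fiber structure, maps $K_j$-orbits to $K_i$-orbits through a homomorphism, and does so consistently so that the $\phi$-corrections can be normalized along the whole inverse system. We plan to handle this by using that $K_i$ is exactly the group of automorphisms of $X_i$ over $Z_i$ commuting with $T_i$, together with ergodicity of the fiber action. This gives a homomorphism $\kappa_{ji}$ defined on a full-measure set, and measurability of $\kappa_{ji}$ then makes it continuous. We can then normalize the $\phi$-corrections one stage at a time, starting from $i=1$.
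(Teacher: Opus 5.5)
Your route differs from the paper's and can be made to work, but one step is justified incorrectly. Your reason for avoiding the paper's route is also mistaken.

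\textbf{The gap.} You need $\kappa_{ji}\colon K_j\to K_i$ to be surjective so that you can lift the correction $f_{ji}\colon Z_j\to K_i$ through $\kappa_{ji}$ by a Borel section and normalize coordinates. ``Because $\pi_{ji}$ is onto'' does not give this. For a general onto map $(z,u)\mapsto(\phi(z),\kappa(u)+f(z))$ between ergodic group extensions, $f$ can spread the image over whole fibers while $\kappa$ has proper image. For example, take the identity of an irrational rotation of $\T$, viewed as a map from the trivial extension of itself to the extension of a point by $\T$: here $\kappa\colon\{0\}\to\T$.

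What rules this out is maximality of the bases, which your framework does provide. If $\chi\in\widehat{K_i}$ annihilates $\kappa_{ji}(K_j)$, then $\chi(v)\circ\pi_{ji}=\chi(f_{ji}(z))$. So $\chi(v)$ is measurable with respect to a factor of $X_i$ that is also a factor of $Z_j$. Such a factor has order $k-1$, hence lies in $\mathcal Z_{k-1}(X_i)$, which forces $\chi=1$. Compactness of $\varprojlim K_i$ needs no surjectivity.

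Your ``main obstacle'' plan via automorphisms of $X_i$ over $Z_i$ presupposes that $V_v$ on $X_j$ descends to $X_i$. What actually proves this is the argument behind your Step~1 fact: $F(z,u+v)-F(z,u)$ is $T_j$-invariant, hence a constant $\kappa(v)$.

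\textbf{Comparison.} The paper takes exactly the route you called unavailable. It cites \cite[Chapter~13, Theorem~5]{hk-book}: a factor map between ergodic nilsystems agrees a.e.\ with a continuous map $g\Gamma_j\mapsto a_{i,j}\Psi_{i,j}(g)\Gamma_i$, where $\Psi_{i,j}$ is a surjective homomorphism. Every $\pi_{ji}$ here is a map between nilsystems, so this applies directly. It is an intrinsic fact about nilsystems and not a consequence of Parry's theorem (which the paper uses freely anyway, as Theorem~\ref{thrm:factor_of_nilsystem_is_nilsystem}).

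With reduced forms, $K_i\cong G_{i,k}$. The identity $\Psi_{i,j}(G_{j,k})=G_{i,k}$ gives the maps $Z_j\to Z_i$ and surjective homomorphisms $K_j\to K_i$ at once. Since $G_{j,k}$ is central, $\pi_{i,j}\circ V_u=V_{\alpha_{i,j}(u)}\circ\pi_{i,j}$ holds everywhere (with $\pi_{i,j}\colon X_j\to X_i$ in the paper's notation), so no corrections ever appear. Continuity makes all compatibilities hold everywhere. The paper then works in the topological inverse limit: it obtains fibers $=$ $K$-orbits from freeness, and $K$-invariance of $\mu$ from unique ergodicity.

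Your approach instead rests on three ingredients: identifying $Z_i$ with $\mathcal Z_{k-1}(X_i)$ together with its functoriality, the classical rigidity of maps between group extensions, and a measure-theoretic recursive normalization. This is more general, since it only uses that the bases are maximal order-$(k-1)$ factors. But it imports the Host--Kra seminorm and cube-measure theory that the paper is specifically trying to bypass. It also carries a.e.\ bookkeeping that the continuous model avoids.
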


\begin{proof}
    For each $i$, write
    \[
    X_i=G_i/\Gamma_i
    \]
    in reduced form, meaning that $G_i$ is generated by the connected component of the identity together with the element defining the nilrotation, and that $\Gamma_i$ contains no non-trivial normal subgroup of $G_i$; see \cite[Chapter~11, Section~1]{hk-book}.
    Let $\tau_i\in G_i$ denote the element defining the nilrotation $T_i$.

    Write
    \[
    \pi_{i,j}\colon X_j\to X_i
    \]
    for the factor maps defining the inverse system.
    Since the systems $X_i$ are ergodic nilsystems, \cite[Chapter~13, Theorem~5]{hk-book} implies that for each $i\leq j$, after modifying $\pi_{i,j}$ on a null set, we may assume that $\pi_{i,j}$ is a continuous factor map of the form
    \[
    \pi_{i,j}(g\Gamma_j)=a_{i,j}\Psi_{i,j}(g)\Gamma_i,
    \]
    where $\Psi_{i,j}\colon G_j\to G_i$ is a continuous surjective homomorphism, $a_{i,j}\in G_i$, and
    \[
    \Psi_{i,j}(\Gamma_j)\subseteq \Gamma_i,
    \qquad
    \Psi_{i,j}(\tau_j)=a_{i,j}^{-1}\tau_i a_{i,j}.
    \]
    Since Haar measure has full support, continuous maps agreeing almost everywhere agree everywhere.
    Thus, after modifying on null sets once and for all, we may assume that
    \[
    \pi_{i,l}=\pi_{i,j}\circ \pi_{j,l}
    \]
    holds everywhere.

    We may therefore form the topological inverse limit
    \[
    X^{\mathrm{top}}:=\varprojlim X_i
    =\left\{(x_i)_i\in \prod_i X_i \,\middle|\, \pi_{i,j}(x_j)=x_i \text{ for all } i\leq j\right\},
    \]
    equipped with the transformation
    \[
    T(x_i)_i=(T_i x_i)_i.
    \]
    Let $\pi_i\colon X\to X_i$ denote the coordinate projections.

    Since each $(X_i,T_i)$ is an ergodic nilsystem, it is uniquely ergodic.
    Unique ergodicity passes to the inverse limit, so $X^{\mathrm{top}}$ is uniquely ergodic as well.
    Let $\mu$ denote its unique $T$-invariant measure.
    Then for every $i$,
    \[
    (T_i)_*(\pi_i)_*\mu = (\pi_i)_*T_*\mu = (\pi_i)_*\mu,
    \]
    and by unique ergodicity of $X_i$ it follows that
    \[
    (\pi_i)_*\mu=\mu_i.
    \]
    We have thus constructed a topological inverse limit $X^{\mathrm{top}}$ with its unique invariant measure $\mu$, and for each $i$ the coordinate projection $\pi_i$ satisfies $(\pi_i)_*\mu=\mu_i$. Hence $(X^{\mathrm{top}},\mu,T)$ realizes the same inverse limit as the original system $(X,\mu,T)$ in the measure-theoretic sense. Replacing $(X,\mu,T)$ by this isomorphic model, we may therefore assume from now on that $X$ is this topological inverse limit.

    For each $i$ and each $j$, let $G_{i,j}$ denote the $j$th term of the lower central series of $G_i$.
    Then the $(k-1)$-step factor of $X_i$ is, by definition,
    \[
    Z_i=G_i/(G_{i,k}\Gamma_i),
    \]
    equipped with Haar measure $\nu_i$ and the nilrotation $S_i$ induced by $\tau_i$.
    Let
    \[
    q_i\colon X_i\to Z_i
    \]
    denote the natural projection.

    For $i\leq j$, define
    \[
    \phi_{i,j}\colon Z_j\to Z_i,\qquad
    \phi_{i,j}\bigl(g(G_{j,k}\Gamma_j)\bigr)
    = a_{i,j}\Psi_{i,j}(g)(G_{i,k}\Gamma_i).
    \]
    This is well defined because
    \[
    \Psi_{i,j}(G_{j,k})=G_{i,k}
    \]
    and $\Psi_{i,j}(\Gamma_j)\subseteq \Gamma_i$.
    Moreover,
    \[
    q_i\circ \pi_{i,j}=\phi_{i,j}\circ q_j,
    \]
    so $\phi_{i,j}$ is continuous and surjective.
    It also intertwines the nilrotations:
    \[
    \phi_{i,j}\circ S_j = S_i\circ \phi_{i,j},
    \]
    since $\Psi_{i,j}(\tau_j)=a_{i,j}^{-1}\tau_i a_{i,j}$.

    If $i\leq j\leq l$, then
    \[
    \phi_{i,l}\circ q_l
    = q_i\circ \pi_{i,l}
    = q_i\circ \pi_{i,j}\circ \pi_{j,l}
    = \phi_{i,j}\circ q_j\circ \pi_{j,l}
    = \phi_{i,j}\circ \phi_{j,l}\circ q_l.
    \]
    Since $q_l$ is surjective, it follows that
    \[
    \phi_{i,l}=\phi_{i,j}\circ \phi_{j,l}.
    \]
    Thus $\{(Z_i,S_i),\phi_{i,j}\}$ is an inverse system.
    Let
    \[
    Z=\varprojlim Z_i,
    \qquad
    S(z_i)_i=(S_i z_i)_i.
    \]
    Exactly as above, $Z$ is uniquely ergodic.
    Let $\nu$ denote its unique invariant measure.
    Then
    \[
    (Z,\nu,S)=\varprojlim (Z_i,\nu_i,S_i)
    \]
    in the measure-theoretic sense.

    We now turn to the top structure groups.
    By definition,
    \[
    K_i=G_{i,k}/(G_{i,k}\cap \Gamma_i).
    \]
    Since $X_i=G_i/\Gamma_i$ is in reduced form, $\Gamma_i$ contains no non-trivial normal subgroup of $G_i$.
    But $G_{i,k}\cap \Gamma_i$ is central, hence normal, so it must be trivial.
    Thus there is a canonical identification
    \[
    K_i\cong G_{i,k}.
    \]
    Under this identification, $K_i$ acts freely on $X_i$ by right translation,
    \[
    V_u(g\Gamma_i)=gu\Gamma_i,
    \qquad u\in K_i,
    \]
    and the quotient is exactly $Z_i$.

    For $i\leq j$, define
    \[
    \alpha_{i,j}\colon K_j\to K_i
    \]
    to be the restriction of $\Psi_{i,j}$ to $G_{j,k}$, viewed under the above identification.
    This is a continuous surjective homomorphism, and $\pi_{i,j}$ is equivariant:
    \[
    \pi_{i,j}(V_u x)=V_{\alpha_{i,j}(u)}(\pi_{i,j}x)
    \qquad x\in X_j,\ u\in K_j.
    \]
    If $i\leq j\leq l$, then for $x\in X_l$ and $u\in K_l$,
    \[
    V_{\alpha_{i,l}(u)}(\pi_{i,l}x)
    = \pi_{i,l}(V_u x)
    = \pi_{i,j}(\pi_{j,l}(V_u x))
    = V_{\alpha_{i,j}(\alpha_{j,l}(u))}(\pi_{i,l}x).
    \]
    Since the $K_i$-action on $X_i$ is free,
    \[
    \alpha_{i,l}=\alpha_{i,j}\circ \alpha_{j,l}.
    \]
    Thus $\{K_i,\alpha_{i,j}\}$ is an inverse system of compact abelian groups.
    Let
    \[
    K=\varprojlim K_i.
    \]

    We now define an action of $K$ on $X$ coordinatewise:
    \[
    V_u(x)=\bigl(V_{u_i}(x_i)\bigr)_i,
    \qquad
    u=(u_i)_i\in K,\ x=(x_i)_i\in X.
    \]
    This is well defined by the equivariance relation above.
    It is a continuous action by homeomorphisms, it commutes with $T$, and it is free because each action of $K_i$ on $X_i$ is free.

    Define
    \[
    q\colon X\to Z,\qquad q((x_i)_i)=(q_i(x_i))_i.
    \]
    This is well defined because
    \[
    \phi_{i,j}(q_j(x_j)) = q_i(\pi_{i,j}(x_j))=q_i(x_i),
    \qquad x = (x_i)_i\in X.
    \]
    Moreover, $q$ is continuous and satisfies
    \[
    q\circ T = S\circ q.
    \]

    We claim that the fibers of $q$ are exactly the $K$-orbits.
    First, if $y=V_u(x)$ for some $u\in K$, then for each $i$,
    \[
    q_i(y_i)=q_i(V_{u_i}(x_i))=q_i(x_i),
    \]
    since $q_i$ is the quotient by the $K_i$-action.
    Hence $q(y)=q(x)$.

    Conversely, suppose $q(x)=q(y)$, and write $x=(x_i)_i$, $y=(y_i)_i$.
    Then for each $i$,
    \[
    q_i(x_i)=q_i(y_i),
    \]
    so, since the fibers of $q_i$ are precisely the $K_i$-orbits, there exists $u_i\in K_i$ such that
    \[
    V_{u_i}(x_i)=y_i.
    \]
    For $i\leq j$, we have
    \[
    V_{u_i}(x_i)
    = y_i
    = \pi_{i,j}(y_j)
    = \pi_{i,j}(V_{u_j}(x_j))
    = V_{\alpha_{i,j}(u_j)}(x_i).
    \]
    By freeness of the $K_i$-action,
    \[
    u_i=\alpha_{i,j}(u_j).
    \]
    Thus $u=(u_i)_i\in K$, and by construction $V_u(x)=y$.

    Hence the fibers of $q$ are exactly the $K$-orbits.
    Thus
    \[
    q\colon (X,T)\to (Z,S)
    \]
    is a topological extension by the compact abelian group $K$.

    Finally, for any $u\in K$, the homeomorphism $V_u$ commutes with $T$.
    Therefore $(V_u)_*\mu$ is $T$-invariant.
    By unique ergodicity of $X$,
    \[
    (V_u)_*\mu=\mu.
    \]
    Thus $\mu$ is $K$-invariant.
    It follows that
    \[
    q\colon (X,\mu,T)\to (Z,\nu,S)
    \]
    is an extension by $K$ in the measure-theoretic sense; see \cite[Chapter~5, Proposition~1]{hk-book}.
\end{proof}

We are now in a position to prove the main theorem.

\begin{proof}[Proof of Theorem~\ref{thm-main}]
The case $k=1$ follows from the fact that every ergodic compact abelian group rotation is the inverse limit of compact abelian Lie group rotations by Pontryagin duality. 

    Let $k\geq 2$, and let 
    \[
    (X,\mu,T)=\varprojlim (X^{(i)},\mu^{(i)},T^{(i)})
    \]
    be an inverse limit of ergodic $k$-step nilsystems, and let
    \[
    \pi\colon (X,\mu,T)\to (Y,\nu,S)
    \]
    be a factor map.
    We will show that $(Y,\nu,S)$ is itself an inverse limit of $k$-step nilsystems.

    For each $i$, let
    \[
    X^{(i)} = Z_k^{(i)} \to Z_{k-1}^{(i)} \to \cdots \to Z_1^{(i)} \to Z_0^{(i)}=\{*\}
    \]
    be the tower of factors of $X^{(i)}$.
    For $2\leq j\leq k$, let $K_j^{(i)}$ denote the structure group of the extension
    \[
    Z_j^{(i)}\to Z_{j-1}^{(i)}.
    \]

    Applying Proposition~\ref{prop:inverse_limit_extension} to the inverse system $\{X^{(i)}\}$, we obtain an induced inverse system structure on $\{Z_{k-1}^{(i)}\}$ and $\{K_k^{(i)}\}$ such that
    \[
    Z_k:=X=\varprojlim X^{(i)}
    \]
    is an extension of
    \[
    Z_{k-1}:=\varprojlim Z_{k-1}^{(i)}
    \]
    by the compact abelian group
    \[
    K_k:=\varprojlim K_k^{(i)}.
    \]

    We now continue this construction downward.
    For each $i$ and each $2\leq j\leq k$, the $(j-1)$-step factor in the tower of factors of $Z_j^{(i)}$ is precisely $Z_{j-1}^{(i)}$.
    Suppose $2\leq j\leq k-1$, and assume that we have already constructed an inverse system structure on the ergodic $j$-step nilsystems $\{Z_j^{(i)}\}$, with inverse limit
    \[
    Z_j=\varprojlim Z_j^{(i)}.
    \]
    Applying Proposition~\ref{prop:inverse_limit_extension} again, we obtain an induced inverse system structure on $\{Z_{j-1}^{(i)}\}$ and $\{K_j^{(i)}\}$ such that $Z_j$ is an extension of
    \[
    Z_{j-1}:=\varprojlim Z_{j-1}^{(i)}
    \]
    by the compact abelian group
    \[
    K_j:=\varprojlim K_j^{(i)}.
    \]

    Repeating this for each $j\geq 2$, and adjoining the trivial system at the bottom, we obtain a tower
    \[
    X=Z_k\to Z_{k-1}\to \cdots \to Z_1\to Z_0=\{*\},
    \]
    where each $Z_j=\varprojlim Z_j^{(i)}$ is an inverse limit of $j$-step nilsystems, and for each $j\geq 2$ the factor map
    \[
    Z_j\to Z_{j-1}
    \]
    is an extension by the compact abelian group $K_j$.

    Note also that $Z_1$ is a rotation on a compact abelian group, since it is an inverse limit of $1$-step nilsystems.
    Thus, if we set
    \[
    K_1:=Z_1,
    \]
    then the map $Z_1\to Z_0$ is also an extension by the compact abelian group $K_1$.

    From now on, write
    \[
    p_j\colon X\to Z_j
    \]
    for the factor maps in this tower.

    For $0\leq j\leq k$, define
    \[
    \mathcal W_j:=\pi^{-1}(\mathcal Y)\vee p_j^{-1}(\mathcal Z_j),
    \]
    where script letters denote the corresponding sigma-algebras.
    Each $\mathcal W_j$ is an invariant sub-sigma-algebra of $\mathcal X$, and we write $(W_j,\lambda_j,R_j)$ 
    for the associated factor.

    We claim, by downward induction on $j$, that each $(W_j,\lambda_j,R_j)$ is an inverse limit of $k$-step nilsystems.

    For $j=k$, we have $\mathcal W_k=\mathcal X$, since $Z_k=X$.
    Hence $(W_k,\lambda_k,R_k)$ is isomorphic to $(X,\mu,T)$, and is therefore an inverse limit of $k$-step nilsystems by assumption.

    Now let $j\leq k-1$, and assume that $(W_{j+1},\lambda_{j+1},R_{j+1})$ is an inverse limit of $k$-step nilsystems.
    By definition,
    \[
    \mathcal W_{j+1}
    = \pi^{-1}(\mathcal Y)\vee p_{j+1}^{-1}(\mathcal Z_{j+1})
    = \mathcal W_j \vee p_{j+1}^{-1}(\mathcal Z_{j+1}).
    \]
    Since $Z_{j+1}\to Z_j$ is an extension by the compact abelian group $K_{j+1}$, \cite[Chapter~5, Lemma~18]{hk-book} implies that there exists a closed subgroup $H\leq K_{j+1}$ 
    such that $W_{j+1}$ is an extension of $W_j$ by the compact abelian group $H$.

    By the induction hypothesis, $W_{j+1}$ is an inverse limit of $k$-step nilsystems.
    It is also ergodic, since it is a factor of the ergodic system $X$.
    Therefore Theorem~\ref{thrm:special_case} applies, and we conclude that $(W_j,\lambda_j,R_j)$ is an inverse limit of $k$-step nilsystems.

    By downward induction, $(W_0,\lambda_0,R_0)$ is an inverse limit of $k$-step nilsystems.
    But $\mathcal W_0=\pi^{-1}(\mathcal Y)$,     so $(W_0,\lambda_0,R_0)$ is isomorphic to $(Y,\nu,S)$.
    This completes the proof.
\end{proof}

\section{Proof of Theorem \ref{thrm:topological_version_main_thrm}}\label{sec:topological}

Throughout this section, a topological $k$-step pro-nilsystem is a topological dynamical system, which is isomorphic in the topological dynamical sense to an inverse limit of $k$-step nilsystems. 

As is explained in \cite[Chapter 13, Section 3]{hk-book}, if $(X, \mu, T)$ is an ergodic $k$-step pro-nilsystem, then after replacing it by an isomorphic system, we may assume that $(X, T)$ is a uniquely ergodic topological $k$-step pro-nilsystem, with $\mu$ as its unique invariant probability measure. 
For the rest of this section, every ergodic $k$-step pro-nilsystem is understood to be represented in this way: we take the corresponding uniquely ergodic topological $k$-step pro-nilsystem model, and equip it with its unique invariant probability measure.

We will need the following elementary criterion for upgrading certain measure-theoretic isomorphisms to topological ones. 

\begin{lemma}\label{lem:upgrade_mble_to_top_isom}
    Let $(X, T)$ be a distal uniquely ergodic topological dynamical system, and let $\mu$ be its unique invariant probability measure. 
    Let 
    \[
    \pi \colon (X, T) \to (Y, S)
    \]
    be a topological factor map, and set $\nu := \pi_* \mu$.
    If $\pi$ is a measure-theoretic isomorphism from $(X, \mu, T)$ to $(Y, \nu, S)$, then $\pi$ is a topological isomorphism.
\end{lemma}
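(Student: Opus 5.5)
The plan is to show that $\pi$ is injective. Since $X$ is compact and $Y$ is Hausdorff, a continuous bijection $X\to Y$ is then a homeomorphism, and because it intertwines $T$ and $S$ it is a topological isomorphism. (Surjectivity is part of being a topological factor map.) So we argue by contradiction: suppose $x_1\neq x_2$ in $X$ satisfy $\pi(x_1)=\pi(x_2)$, and use the fact that $\pi$ is a measure-theoretic isomorphism to derive a contradiction.

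The key object is the relative product, or graph-type joining, over $Y$. Since $\pi$ is a measure-theoretic isomorphism, there is a measurable inverse $\psi\colon Y\to X$ with $\psi\circ\pi=\Id_X$ $\mu$-a.e. Hence the measure $\lambda:=\int_Y \mu_y\times\mu_y\,d\nu(y)$ on $R_\pi:=\{(x,x'):\pi(x)=\pi(x')\}$ is the diagonal measure $(\Id,\Id)_*\mu$, because the disintegration $\mu_y$ is the point mass at $\psi(y)$. On the other hand, consider the closed $T\times T$-invariant set $R_\pi\subseteq X\times X$. The point $(x_1,x_2)\in R_\pi$ lies off the diagonal $\Delta_X$. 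Distality of $T$ implies that $T\times T$ is distal, so the orbit closure $M:=\overline{\{(T^nx_1,T^nx_2)\}}$ is a minimal subset of $R_\pi$. By distality, $\inf_n d(T^nx_1,T^nx_2)>0$, so $M$ is disjoint from $\Delta_X$. Let $\theta$ be any $T\times T$-invariant probability measure on $M$ (Krylov--Bogolyubov). Both marginals of $\theta$ are $T$-invariant, so by unique ergodicity they equal $\mu$. Thus $\theta$ is a self-joining of $(X,\mu,T)$ supported on $R_\pi\setminus\Delta_X$.

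Now we show that any self-joining $\theta$ supported on $R_\pi$ must be the diagonal. Write $\theta$ as the pushforward of a measure on pairs, and note that $\pi\circ\mathrm{pr}_1=\pi\circ\mathrm{pr}_2$ holds $\theta$-a.e. Since $\psi\circ\pi=\Id$ off a $\mu$-null set $N$, and both marginals of $\theta$ are $\mu$, for $\theta$-a.e.\ $(x,x')$ we have $x,x'\notin N$. Hence
\[
x=\psi(\pi(x))=\psi(\pi(x'))=x'.
\]
Therefore $\theta(\Delta_X)=1$, contradicting $\theta(M)=1$ with $M\cap\Delta_X=\emptyset$. So $\pi$ is injective, and the argument is complete.

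The main step to verify carefully is the use of distality: that the orbit closure of an off-diagonal pair under $T\times T$ stays a positive distance from the diagonal, hence is a closed set missing $\Delta_X$ that carries an invariant measure. This is exactly the definition of distality (no proximal pairs other than diagonal ones), so it should be routine. The remaining points are the measurable-inverse argument, which uses only that the exceptional set is $\mu$-null and that both marginals equal $\mu$, and unique ergodicity, which forces the marginals of $\theta$ to be $\mu$.
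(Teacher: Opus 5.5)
Your proof is correct and complete. The paper does not prove this lemma itself. It only remarks that the lemma can be derived easily from Proposition~2.5 of Downarowicz and Glasner on isomorphic extensions, so your proposal is a self-contained replacement for that citation.

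Your argument combines three steps:
\begin{enumerate}
\item Because $\pi$ is a measure-theoretic isomorphism, any probability measure concentrated on $R_\pi$ whose two marginals equal $\mu$ is concentrated on $\Delta_X$. This step uses only the marginals and the measurable inverse $\psi$, not invariance.
\item By distality, an off-diagonal pair in $R_\pi$ generates a closed $T\times T$-invariant subset of $R_\pi$ that misses $\Delta_X$.
\item Unique ergodicity makes any invariant measure on that subset such a measure, giving the contradiction.
\end{enumerate}

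Besides being elementary, your route gives a slightly sharper statement. You never need the orbit closure $M$ to be minimal; it suffices that it is closed, nonempty, invariant and disjoint from $\Delta_X$. Nor do you need $X$ to be distal as a whole. You only use that no pair $(x_1,x_2)\in R_\pi$ with $x_1\neq x_2$ is proximal, that is, that $\pi$ is a distal extension.

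The paragraph on the relatively independent joining $\int_Y \mu_y\times\mu_y\,d\nu(y)$ is correct but plays no role in the argument and can be dropped.
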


Factor maps of this type were studied by Downarowicz and Glasner in \cite{downarowicz2016isomorphic}.
In particular, this lemma can be derived quite easily from their Proposition 2.5.

We remark at this point that topological pro-nilsystems are distal. 
Indeed, nilsystems are distal, see \cite[Chapter 11, Theorem 2]{hk-book}, and distality is preserved under inverse limits.

The next proposition is the main technical point. 

\begin{proposition}\label{prop:factors_between_pronilsystems}
    Let $(X, \mu, T)$ and $(Y, \nu, S)$ be ergodic $k$-step pro-nilsystems, equipped with their topological structures as above. 
    If 
    \[
    \pi \colon (X, \mu, T) \to (Y, \nu, S)
    \]
    is a measure-theoretic factor map, then $\pi$ agrees almost everywhere with a topological factor map 
    \[
    \pi^{\mathrm{top}} \colon (X, T) \to (Y, S). 
    \]
    Moreover, if $\pi$ is a measure-theoretic isomorphism, then $\pi^{\mathrm{top}}$ is a topological isomorphism.
\end{proposition}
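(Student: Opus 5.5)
Write $(Y,S)=\varprojlim (Y_j,S_j)$ with each $(Y_j,\nu_j,S_j)$ an ergodic $k$-step nilsystem and $r_j\colon Y\to Y_j$ the coordinate projections; similarly $(X,T)=\varprojlim(X_i,T_i)$ with projections $p_i$. To produce $\pi^{\mathrm{top}}$ it suffices to produce, for each $j$, a continuous factor map $\psi_j\colon (X,T)\to (Y_j,S_j)$ agreeing a.e.\ with $r_j\circ\pi$, such that $\psi_j=\rho_{j,l}\circ\psi_l$ everywhere for $j\le l$ (where $\rho_{j,l}$ are the bonding maps of $Y$). Then $\pi^{\mathrm{top}}:=(\psi_j)_j$ lands in the inverse limit, is continuous, intertwines $T$ and $S$, agrees a.e.\ with $\pi$ (checked coordinatewise on a countable family), and is surjective since its image is compact and carries $\pi_*\mu=\nu$, which has full support on $Y$ (unique ergodicity plus minimality, or full support of each $\nu_j$).

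To build $\psi_j$, I would first factor $r_j\circ\pi$ through a finite stage of $X$. The measure-theoretic map $r_j\circ\pi\colon X\to Y_j$ is a factor map onto a nilsystem; the sigma-algebra $\pi^{-1}r_j^{-1}(\mathcal Y_j)$ is generated by countably many functions (e.g.\ pullbacks of continuous functions on $Y_j$), and I want it to be contained in some $p_i^{-1}(\mathcal X_i)$. This is the step I expect to be the main obstacle: a priori $r_j\circ\pi$ is only approximable by $\mathcal X_i$-measurable maps. Here the local rigidity of Corollary~\ref{cor:local_rigidity_factor_maps} should be used: approximate $r_j\circ\pi$ by $\mathcal X_i$-measurable maps, and compare with the joining $(r_j\circ\pi,\, \cdot)_*\mu$. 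More concretely, consider the conditional expectation approach: the factor $W_i:=\pi^{-1}r_j^{-1}(\mathcal Y_j)\cap p_i^{-1}(\mathcal X_i)$ increases to the full $Y_j$-sigma-algebra. Alternatively (what I would try first), use that $Y_j$ is a nilsystem, so $\pi^{-1}r_j^{-1}(\mathcal Y_j)$ is generated by finitely many continuous nilcharacter-type functions of finite ``complexity'', each of which lies in a finite-dimensional $T$-eigen/structure subspace; a compactness-of-$Y_j$ argument plus Proposition~\ref{prop:neighbourhood_diag_joining_graph} applied to the self-joining $(r_j\circ\pi,\ \phi\circ p_i)_*\mu$ with a nearby approximating factor map $\phi\colon X_i\to Y_j$ gives, for $i$ large, an automorphism $A$ of $Y_j$ with $r_j\circ\pi=A\circ\phi\circ p_i$ a.e. Producing such an approximating factor map $\phi$ (rather than merely a measurable map) is exactly where care is needed; I would obtain it by taking the joining of $X_i$ with $Y_j$ induced by $(p_i,r_j\circ\pi)$, which is an ergodic nilsystem by Proposition~\ref{prop:joining_nilsystems_is_nilsystem}, and showing its fibers over $X_i$ shrink to points as $i\to\infty$, then invoking Lemma~\ref{lem:no_small_subnilmanifolds} to conclude they are points for $i$ large, so the joining is a graph of a map $\phi_j\colon X_i\to Y_j$.

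Once $r_j\circ\pi=\phi_j\circ p_i$ a.e.\ with $\phi_j$ a measurable factor map between ergodic nilsystems, \cite[Chapter~13, Theorem~5]{hk-book} lets me replace $\phi_j$ by a continuous factor map of the form $g\Gamma\mapsto a\Psi(g)\Lambda$, and I set $\psi_j:=\phi_j\circ p_i$. Compatibility $\psi_j=\rho_{j,l}\circ\psi_l$ holds a.e., hence everywhere since both sides are continuous and $\mu$ has full support. For the last claim, if $\pi$ is a measure-theoretic isomorphism, then $\pi^{\mathrm{top}}$ is a topological factor map which is a measure isomorphism, and $(X,T)$ is distal (as remarked) and uniquely ergodic, so Lemma~\ref{lem:upgrade_mble_to_top_isom} shows $\pi^{\mathrm{top}}$ is a topological isomorphism.
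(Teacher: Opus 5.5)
Your route differs from the paper's, and its decisive step is not carried out. The paper never tries to factor through a finite stage. It pushes the graph joining of $\pi$ to $X_i\times Y_i$ and gets invariant subnilmanifolds $Z_i$. It then passes to $Z=\varprojlim Z_i\subseteq X\times Y$ and uses that $Z$ is distal and uniquely ergodic. Hence the projection $Z\to X$, a measure-theoretic isomorphism, is a homeomorphism by Lemma~\ref{lem:upgrade_mble_to_top_isom}. You instead want, for fixed $j$, a stage $i$ at which $\lambda_{i,j}:=(p_i,r_j\circ\pi)_*\mu$ is already a graph.

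\textbf{The gap.} It is the sentence ``showing its fibers over $X_i$ shrink to points \dots\ then invoking Lemma~\ref{lem:no_small_subnilmanifolds}''. What $\bigvee_i p_i^{-1}(\mathcal X_i)=\mathcal X$ gives you is martingale convergence: the conditional distributions of $r_j\circ\pi$ given $p_i^{-1}(\mathcal X_i)$ converge weakly to Dirac masses a.e. Weak concentration of a measure says nothing about the diameter of its support, so the diameter form of the lemma cannot be applied. Making the topological fibers collapse is exactly what the paper needs Lemma~\ref{lem:upgrade_mble_to_top_isom} for. The detour via Proposition~\ref{prop:neighbourhood_diag_joining_graph} and Corollary~\ref{cor:local_rigidity_factor_maps} does not help either. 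Those results concern self-joinings of one nilsystem near the diagonal, and you have no approximating factor map $X_i\to Y_j$ to start from.

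\textbf{How to close it.} The step can be closed with tools from the paper.
By Proposition~\ref{prop:joining_nilsystems_is_nilsystem}, $\lambda_{i,j}$ is Haar measure on an invariant subnilmanifold $Z_{i,j}=H\cdot z$. As in the proof of Proposition~\ref{prop:neighbourhood_diag_joining_graph}, its fibers $F^{(i)}_{x'}$ over $x'\in X_i$ are subnilmanifolds, and they project to subnilmanifolds of $Y_j$. They are all trivial or all non-trivial, since $H$ permutes them transitively. Moreover, $\lambda_{i,j}$ disintegrates over $X_i$ into their Haar measures, so $\E_\mu(f\circ r_j\circ\pi\mid p_i^{-1}(\mathcal X_i))(x)=\int f\,dm_{F^{(i)}_{p_ix}}$ for $f\in C(Y_j)$. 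Martingale convergence and dominated convergence then give
\[
\int_X\iint d_{Y_j}(y,y')\,dm_{F^{(i)}_{p_ix}}(y)\,dm_{F^{(i)}_{p_ix}}(y')\,d\mu(x)\longrightarrow 0 .
\]
On the other hand, Lemma~\ref{lem:no_small_subnilmanifolds_averaged} applied to $Y_j$ bounds the inner double integral below by a constant $c_j>0$, \emph{independent of $i$}, whenever the fibers are non-trivial. So for large $i$ the fibers are points, and $Z_{i,j}\to X_i$ is a continuous bijection of compacta. Thus $Z_{i,j}$ is the graph of a continuous equivariant $\phi\colon X_i\to Y_j$ with $\phi_*\mu_i=\nu_j$ and $r_j\circ\pi=\phi\circ p_i$ a.e. You do not need \cite[Chapter~13, Theorem~5]{hk-book} for this.

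With this repair, the rest of your argument goes through: compatibility by full support, assembling $\pi^{\mathrm{top}}$ and its surjectivity, and Lemma~\ref{lem:upgrade_mble_to_top_isom} on the distal uniquely ergodic $X$.

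\textbf{Comparison with the paper.} The repaired route avoids the Downarowicz--Glasner-type upgrade for the existence of $\pi^{\mathrm{top}}$ and stays within nilmanifold geometry. It also proves more than the paper: each $r_j\circ\pi^{\mathrm{top}}$ factors continuously through a finite stage $X_i$. Applying it to $\pi^{-1}$ as well would give the isomorphism clause without Lemma~\ref{lem:upgrade_mble_to_top_isom}.
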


This result is proved in \cite[Theorem A.1]{host2010nilsequences} and \cite[Chapter 13, Proposition 15]{hk-book}, using the theory of dynamical dual functions.
We sketch a different proof, which does not rely on Host--Kra theory. 

\begin{proof}[Sketch of proof]
    Write 
    \[
    (X, T) = \varprojlim (X_i, T_i), \qquad (Y, S) = \varprojlim (Y_i, S_i)
    \]
    as topological inverse limits of uniquely ergodic $k$-step nilsystems, and let $p_i \colon X \to X_i$ and $q_i \colon Y \to Y_i$ be the corresponding factor maps. 

    Let 
    \[
    \lambda := (\Id_X, \pi)_* \mu
    \]
    be the graph joining of $\pi$.
    For each $i$, set 
    \[
    \lambda_i := (p_i \times q_i)_* \lambda.
    \]
    Then $\lambda_i$ is an ergodic joining of the nilsystems $X_i$ and $Y_i$ equipped with their Haar measures. 
    Hence by Proposition 3.1, $\lambda_i$ is the Haar measure of an invariant subnilmanifold 
    \[
    Z_i \subseteq X_i \times Y_i, 
    \]
    and hence $(Z_i, T_i \times S_i)$ is a uniquely ergodic $k$-step nilsystem. 

    There is an induced inverse system structure on the systems $(Z_i, T_i \times S_i)$, and the inverse limit can be identified as 
    \[
    (Z, T \times S) \cong \varprojlim (Z_i, T_i \times S_i)
    \]
    where $Z \subseteq X \times Y$ is some $T \times S$-invariant closed subset.
    Then $(Z, T \times S)$ is uniquely ergodic, and $\lambda$ is its unique invariant probability measure.

    Let 
    \[
    r_X \colon Z \to X, \qquad r_Y \colon Z \to Y
    \]
    be the coordinate projections, which are topological factor maps. 
    Since $\lambda$ is the graph joining of $\pi$, the map $r_X$ is a measure-theoretic isomorphism. 
    Since $(Z, T \times S)$ is distal and uniquely ergodic, Lemma \ref{lem:upgrade_mble_to_top_isom} implies that $r_X$ is a topological isomorphism. 
    Therefore 
    \[
    \pi^{\mathrm{top}} := r_Y \circ r_X^{-1}
    \]
    is a topological factor map, which by construction agrees almost everywhere with $\pi$.
    If $\pi$ is a measure-theoretic isomorphism, then the same argument, applied to $r_Y$, shows that $r_Y$ is also a topological isomorphism, so that $\pi^{\mathrm{top}}$ is a topological isomorphism. 
\end{proof}

We can now prove Theorem \ref{thrm:topological_version_main_thrm}. 

\begin{proof}[Proof of Theorem \ref{thrm:topological_version_main_thrm}]
    Let $(X, T)$ be a transitive topological $k$-step pro-nilsystem, and let 
    \[
    \pi \colon (X, T) \to (Y, S)
    \]
    be a topological factor map. 
    Since $(X, T)$ is distal and transitive, it is minimal and hence it is an inverse limit of minimal nilsystems. 
    Minimal nilsystems are uniquely ergodic, so $(X, T)$ is uniquely ergodic.
    Let $\mu$ be its unique invariant probability measure and define
    \[
    \nu := \pi_* \mu.
    \]
    Then $(X, \mu, T)$ is an ergodic measure-preserving $k$-step pro-nilsystem, and $\pi$ is a measure-theoretic factor map from $(X, \mu, T)$ to $(Y, \nu, S)$.

    By Theorem \ref{thm-main}, there exists an ergodic $k$-step pro-nilsystem $(\widetilde{Y}, \widetilde{\nu}, \widetilde{S})$ and a measure-theoretic isomorphism
    \[
    \Phi \colon (Y, \nu, S) \to (\widetilde{Y}, \widetilde{\nu}, \widetilde{S}).
    \]
    By the convention above, we assume that $(\widetilde{Y}, \widetilde{S})$ is a uniquely ergodic topological $k$-step pro-nilsystem with unique invariant measure $\widetilde{\nu}$.

    Then 
    \[
    \Phi \circ \pi \colon (X, \mu, T) \to (\widetilde{Y}, \widetilde{\nu}, \widetilde{S}) 
    \]
    is a measure-theoretic factor map between ergodic $k$-step pro-nilsystems. 
    By Proposition \ref{prop:factors_between_pronilsystems}, $\Phi \circ \pi$ agrees almost everywhere with a topological factor map 
    \[
    \widetilde{\pi} \colon (X, T) \to (\widetilde{Y}, \widetilde{S}).
    \]
    Consider the closed $S \times \widetilde{S}$-invariant subset 
    \[
    Z := (\pi, \widetilde{\pi})(X) \subseteq Y \times \widetilde{Y}.
    \]
    Since $(X, T)$ is distal and uniquely ergodic, its factor $(Z, S \times \widetilde{S})$ is also distal and uniquely ergodic. 

    Let 
    \[
    \lambda := (\pi, \widetilde{\pi})_* \mu
    \]
    be the unique invariant probability measure on $Z$. 
    Let 
    \[
    p \colon Z \to Y, \qquad \widetilde{p} \colon Z \to \widetilde{Y}
    \]
    be the coordinate projections.
    Then $p$ and $\widetilde{p}$ are both topological factor maps. 
    Moreover, since $\widetilde{\pi} = \Phi \circ \pi$ holds $\mu$-almost everywhere, we have that 
    \[
    \lambda = (\pi, \widetilde{\pi})_* \mu = (\Id_Y, \Phi)_* \nu.
    \]
    Thus $\lambda$ is the graph joining of the measure-theoretic isomorphism $\Phi$.
    It follows that both 
    \[
    p \colon (Z, \lambda, S \times \widetilde{S}) \to (Y, \nu, S), \qquad \widetilde{p} \colon (Z, \lambda, S \times \widetilde{S}) \to (\widetilde{Y}, \widetilde{\nu}, \widetilde{S})
    \]
    are measure-theoretic isomorphisms. 

    Applying Lemma \ref{lem:upgrade_mble_to_top_isom}, we get that $p$ and $\widetilde{p}$ are topological isomorphisms. 
    Therefore 
    \[
    \widetilde{p} \circ p^{-1} \colon (Y, S) \to (\widetilde{Y}, \widetilde{S})
    \]
    is a topological isomorphism. 
    We conclude that $(Y, S)$ is a topological $k$-step pro-nilsystem.
\end{proof}

\end{document}